\newtheorem{theorem}{Theorem}[section]
\newtheorem{lemma}[theorem]{Lemma}
\newtheorem{problem}[theorem]{Problem}
\newtheorem{example}[theorem]{Example}
\newcommand{\I}{\mathcal{I}}
\newcommand{\E}{\mathcal{E}}
\newcommand{\U}{\mathcal{U}}
\newcommand{\V}{\mathcal{V}}
\newcommand{\W}{\mathcal{W}}
\newcommand{\A}{\mathcal{A}}
\newcommand{\M}{\mathcal{M}}
\newcommand{\C}{\mathcal{C}}
\newcommand{\F}{\mathcal{F}}
\newcommand{\HH}{\mathcal{H}}
\newcommand{\id}{\mathrm{id}}
\newcommand{\cl}{\mathrm{cl}}
\newcommand{\IQ}{\mathbb{Q}}
\newcommand{\IN}{\mathbb{N}}
\newcommand{\w}{\omega}
\newcommand{\e}{\varepsilon}
\newcommand{\diam}{\mathrm{diam}}
\newcommand{\mesh}{\mathrm{mesh}}
\newcommand{\Clop}{\mathrm{Clop}}
\def\w{\omega}
\title[Classifying invariant $\sigma$-ideals on Cantor measure space]{Classifying invariant $\sigma$-ideals with analytic base\\ on good Cantor measure spaces}
\author{Taras Banakh,  Robert Ra\l owski, Szymon \.Zeberski}
\address[Taras Banakh]{Department of Mathematics, Ivan Franko National University of Lviv, Ukraine, and
Institute of Mathematics, Jan Kochanowski University, Kielce, Poland}
\email{t.o.banakh@gmail.com}
\address[R.Ra\l owski, S.\.Zeberski]{Institute of Mathematics and Computer Science, Wroc\l aw University of Technology, Wroc\l aw, Poland}
\email{robert.ralowski@pwr.wroc.pl, szymon.zeberski@pwr.wroc.pl}
\subjclass{03E15; 28A05; 28D05; 54H05}
\keywords{good Cantor measure space, measure-preserving homeomorphism, invariant $\sigma$-ideal}
\begin{document}

\maketitle
\footnotetext{\noindent
The work has been partially financed by NCN means granted by decision DEC-2011/01/B/ST1/01439. }

\begin{abstract} Let $X$ be a zero-dimensional compact metrizable space endowed with a strictly positive continuous Borel $\sigma$-additive measure $\mu$ which is good in the sense that for any clopen subsets $U,V\subset X$ with $\mu(U)<\mu(V)$ there is a clopen set $W\subset V$ with $\mu(W)=\mu(U)$. We study $\sigma$-ideals with Borel base on $X$ which are invariant under the action of the group $\HH_\mu(X)$ of measure-preserving homeomorphisms of $(X,\mu)$, and show that any such $\sigma$-ideal $\I$ is equal to one of seven $\sigma$-ideals: $\{\emptyset\}$, $[X]^{\le\w}$, $\mathcal E$, $\M\cap\mathcal N$, $\M$, $\mathcal N$, or $[X]^{\le \mathfrak c}$. Here $[X]^{\le\kappa}$ is the ideal consisting of subsets of cardiality $\le\kappa$ in $X$, $\M$ is the ideal of meager subsets of $X$, $\mathcal N=\{A\subset X:\mu(A)=0\}$ is the ideal of null subsets of $(X,\mu)$, and $\mathcal E$ is the $\sigma$-ideal generated by closed null subsets of $(X,\mu)$.
\end{abstract}

\section{Introduction and Main Result}

The investigation of (topologically) invariant $\sigma$-ideals with Borel or analytic base on some model topological spaces was initiated by the authors in \cite{BMRZ1} and \cite{BMRZ2}. In this paper we shall study and classify invariant $\sigma$-ideals with analytic base on (good) Cantor measure spaces.

A {\em measure space} is a pair $(X,\mu)$ consisting of a topological space $X$ and a $\sigma$-additive measure $\mu:\mathcal B(X)\to[0,\infty)$ defined on the $\sigma$-algebra of Borel subsets of $X$. A measure space $(X,\mu)$ is called a {\em Cantor measure space} if the topological space $X$ is homeomorphic to the Cantor cube $\{0,1\}^\w$ and the measure $\mu$ is {\em continuous} in the sense that $\mu(\{x\})=0$ for any point $x\in X$. By Brouwer's Theorem \cite[7.4]{Ke}, a topological space $X$ is homeomorphic to the Cantor cube $\{0,1\}^\w$ if and only if $X$ is a zero-dimensional compact metrizable space without isolated points. In this case $X$ is called  {\em a Cantor set}.

For a measure space $(X,\mu)$ the set $$\mu(\Clop(X))=\{\mu(U):\mbox{$U$ is clopen in $X$}\}$$is called the {\em clopen values set} of $(X,\mu)$. By a {\em clopen} set in a topological space $X$ we understand a subset which is simultaneously closed and open in $X$. A topological space $X$ is {\em zero-dimensional} if clopen sets form a base of the topology of $X$.

Two measure spaces $(X,\mu)$ and $(Y,\lambda)$ are defined to be {\em isomorphic} if there exists a measure-preserving homeomorphism $h:X\to Y$.
The measure-preserving property of $h$ means that $\lambda(h(B))=\mu(B)$ for any Borel subset $B\subset X$. It is clear that for any two isomorphic measure spaces $(X,\mu)$ and $(Y,\lambda)$ we get $\mu(\Clop(X))=\lambda(\Clop(Y))$. In \cite{Akin} Akin proved that for good Cantor measure spaces the converse implication holds. A Cantor measure space $(X,\mu)$ is called {\em good} if its measure $\mu$ is {\em good} in the sense of Akin \cite{Akin}, i.e., $\mu$ is continuous, {\em strictly positive} (which means that $\mu(U)>0$ for any non-empty open set $U\subset X$), and satisfies the {\em Subset Condition} which means that for any clopen sets $U,V\subset X$ with $\mu(U)<\mu(V)$ there is a clopen set $U'\subset V$ such that $\mu(U')=\mu(U)$. According to Akin's Theorem~2.9 \cite{Akin}, two good Cantor measure spaces $(X,\mu)$ and $(Y,\lambda)$ are isomorphic if and only if $\mu(\Clop(X))=\lambda(\Clop(Y))$.

The class of good Cantor measure spaces includes all infinite compact metrizable zero-dimensional topological groups $G$ endowed with the Haar measure. Moreover, by Theorem~2.16 \cite{Akin}, a Cantor measure space $(X,\mu)$ is isomorphic to a (monothetic) compact topological group $G$ endowed with the Haar measure if and only if $(X,\mu)$ is good and $1\in \mu(\Clop(X))\subset\IQ\cap[0,1]$.

The aim of this paper is to study and classify invariant $\sigma$-ideals with Borel (or analytic) base on (good) Cantor measure spaces.

A family $\I$ of subsets of a set $X$ is called an {\em ideal} on $X$ if for any sets $A,B\in\I$ and $C\subset X$ we get $A\cup B\in\I$ and $A\cap C\in\I$. An ideal $\I$ on $X$ is a {\em $\sigma$-ideal} if $\bigcup\A\in\I$ for any countable subfamily $\A\subset\I$.
A subfamily $\mathcal B\subset\I$ is called a {\em base} for the ideal $\I$ if each set $A\in\I$ is contained in some set $B\in\mathcal B$.
Each family $\mathcal B$ of subsets of $X$ generates a $\sigma$-ideal $$\langle\mathcal B\rangle=\{A\subset X:\mbox{$A\subset \bigcup\C$ for some countable subfamily $\C\subset\mathcal B$}\}.$$ This is the smallest $\sigma$-ideal containing the family $\mathcal B$.

Observe that for every infinite cardinal $\kappa$ the family $[X]^{\le\kappa}=\{A\subset X:|A|\le\kappa\}$ of subsets of cardinality $\le\kappa$ is a $\sigma$-ideal on $X$. For $\kappa=0$ the ideal $[X]^{\le 0}=\{\emptyset\}$ is a $\sigma$-ideal, too.

We shall say that an ideal $\I$ on a topological space $X$ has {\em Borel} ({\em analytic}) {\em base} if $\I$ has a base $\mathcal B\subset\I$ consisting of Borel (analytic) subsets of $X$. Let us recall that a subset $A\subset X$ is called {\em analytic} if $A$ is a continuous image of a Polish space. Since each Borel subset of a Polish space is analytic \cite[13.7]{Ke}, each ideal with Borel base on a Polish space has an analytic base.

Observe that for any topological space $X$ the ideals $[X]^{\le0}$, $[X]^{\le\w}$ and $[X]^{\le|X|}$ have Borel base.

An ideal $\I$ on a measure space $(X,\mu)$ will be called {\em invariant} if for any measure-preserving homeomorphism $h:X\to X$ we get $\{h(A):A\in\I\}=\I$. Measure-preserving homeomorphisms of $(X,\mu)$ form a group $\HH_\mu(X)$ called the {\em automorphism group} of the measure space $(X,\mu)$. It is a subgroup of the homeomorphism group $\HH(X)$ of the topological space $X$.

On each measure space $(X,\mu)$ consider the following four invariant $\sigma$-ideals with Borel base:
\begin{itemize}
\item the $\sigma$-ideal $\M$ of meager subsets of $X$ (it is  generated by closed nowhere dense subsets of $X$);
\item the $\sigma$-ideal $\mathcal N=\{A\subset X:\mu(A)=0\}$ of null subsets of $(X,\mu)$ (it is generated by Borel subsets of zero $\mu$-measure);
\item the $\sigma$-ideal $\M\cap\mathcal N$ of meager null subsets of $(X,\mu)$;
\item the $\sigma$-ideal $\mathcal E$ generated by closed null subsets of $(X,\mu)$.
\end{itemize}

The $\sigma$-ideals $\M$, $\mathcal N$, $\M\cap\mathcal N$ and $\mathcal E$ play an important role in Set Theory and its Applications, see \cite{BJ}. For a measure space $(X,\mu)$ endowed with a continuous strictly positive Borel measure $\mu$ these ideals relate as follows:
$$\xymatrix{
&&&&\mathcal M\ar[rd]\\
[X]^{\le0}\ar[r]&[X]^{\le\w}\ar[r]&\mathcal E\ar[r]&\M\cap\mathcal N\ar[ru]\ar[rd]&&[X]^{\le|X|}\\
&&&&\mathcal N\ar[ru]
}
$$
(in the diagram an arrow $\I\to\mathcal J$ indicates that $\I\subset\mathcal J$).

It turns out that on good Cantor measure spaces these seven $\sigma$-ideals exhaust all possible invariant $\sigma$-ideals with analytic base.

\begin{theorem}\label{main} Each invariant $\sigma$-ideal with analytic base on a good Cantor measure space $(X,\mu)$ is equal to one of the $\sigma$-ideals: $[X]^{\le 0}$, $[X]^{\le\w}$, $\mathcal E$, $\M\cap\mathcal N$, $\M$, $\mathcal N$, or $[X]^{\le\mathfrak c}$.
\end{theorem}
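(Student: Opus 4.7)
The plan is to classify $\I$ by ascending the diagram of seven $\sigma$-ideals through a sequence of dichotomies. First, if $\I=\{\emptyset\}$ we are done; otherwise $\I$ contains some singleton, and Akin's Theorem~2.9 of \cite{Akin} together with a back-and-forth argument between clopen neighborhoods of equal measure shows that $\HH_\mu(X)$ acts transitively on $X$, so invariance forces $[X]^{\le\w}\subset\I$. If every member of $\I$ is countable then $\I=[X]^{\le\w}$; otherwise the perfect set theorem for analytic sets yields a non-empty compact perfect $P\in\I$, which we further shrink to a closed nowhere dense perfect null subset $P_0\in\I$. At the other extreme, if $X\in\I$ then $\I=[X]^{\le\mathfrak{c}}$.

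The main technical step is to show that the single member $P_0\in\I$ already forces $\mathcal{E}\subset\I$. I would prove the sublemma that for every closed null set $F\subset X$ there exists $h\in\HH_\mu(X)$ with $h(F)\subset P_0$; then $F\in\I$ by invariance, and $\mathcal{E}\subset\I$ follows since $\mathcal{E}$ is generated by closed null sets. The sublemma is established by a back-and-forth construction using decreasing clopen neighborhoods $V_n\supset F$ and $W_n\supset P_0$ of vanishing measure: at each stage the Subset Condition lets us partition $V_n$ into clopen atoms and match them with clopen subsets of $W_n$ of the same measure, and the limit homeomorphism pushes $F$ into $\bigcap_n W_n=P_0$.

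Assuming $\mathcal{E}\subsetneq\I$, I would pick an analytic $A\in\I\setminus\mathcal{E}$ and split cases by whether $\I$ is contained in $\M$ and/or $\mathcal{N}$. Three parallel covering lemmas, all proved by back-and-forth variants of the construction above, yield: (i) if $\I\subset\M\cap\mathcal{N}$, every meager null Borel set embeds into $A$ via some element of $\HH_\mu(X)$, so $\M\cap\mathcal{N}\subset\I$ and $\I=\M\cap\mathcal{N}$; (ii) if $\I\subset\M$ but $\I\not\subset\mathcal{N}$, then $\I$ contains a compact meager set of positive measure into which every closed nowhere dense set embeds, giving $\M\subset\I$ and $\I=\M$; (iii) symmetrically $\I\subset\mathcal{N}$ and $\I\not\subset\M$ give $\I=\mathcal{N}$. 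In the remaining case $\I\not\subset\M$ and $\I\not\subset\mathcal{N}$, pick a compact $K\in\I$ with $\mu(K)>0$: if $K$ contains a clopen set then $X\in\I$ by tiling; otherwise $K$ is nowhere dense, so the sublemma from (ii) gives $\M\subset\I$, and then a non-meager $A_1\in\I$ comeager in some clopen $U$ yields $U=A_1\cup(U\setminus A_1)\in\I$ since $U\setminus A_1\in\M\subset\I$; tiling $X$ by $\HH_\mu(X)$-translates of $U$ then forces $X\in\I$ and $\I=[X]^{\le\mathfrak{c}}$.

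The hardest part is the back-and-forth construction in the second paragraph (and its three variants in the third): producing a measure-preserving homeomorphism of $(X,\mu)$ that embeds a prescribed small Borel set into a prescribed analytic reference set of the appropriate type. The Subset Condition is used at every stage to balance clopen measures exactly, and Akin's Theorem---which produces measure-preserving homeomorphisms between any two clopen sets of equal measure---is what makes the back-and-forth arguments feasible.
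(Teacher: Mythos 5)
Your overall decomposition is the same as the paper's: the ascending chain of dichotomies $[X]^{\le 0}$, $[X]^{\le\w}$, $\mathcal E$, $\M\cap\mathcal N$, then splitting on $\M$ versus $\mathcal N$ and finishing with $[X]^{\le\mathfrak c}$, is exactly the structure of the paper's Section~\ref{s4}. Your sublemma for the $\mathcal E$-step (every closed null set is carried into a null Cantor set $P_0\in\I$ by some $h\in\HH_\mu(X)$) is the paper's Lemma~\ref{l:C}; your back-and-forth is a workable substitute for the paper's route via topological universality of the Cantor set plus the extension lemma (Lemma~\ref{exmes}), since a homeomorphism between closed null sets is automatically measure-preserving.

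The genuine gap is in your case (i), the passage from $\I\not\subset\mathcal E$ to $\M\cap\mathcal N\subset\I$. You take an analytic $A\in\I\setminus\mathcal E$ and assert that a back-and-forth embeds every meager null Borel set into $A$. First, that literal statement is false: a meager null set can be dense in $X$ while the relevant part of $A$ is nowhere dense, so no homeomorphism of $X$ can push the former into the latter; you must first reduce to nowhere dense null sets (harmless, since every meager null set is a countable union of nowhere dense null sets and $\I$ is a $\sigma$-ideal, but it has to be said). More seriously, when $\mu(A)=0$ your back-and-forth has nothing to aim at: to match clopen atoms against pieces of $A$ you need to know that $A$ contains a subset with a topologically and metrically controlled closure, and extracting such a subset from an arbitrary analytic set outside $\mathcal E$ is a separate descriptive-set-theoretic theorem, not a variant of the earlier construction. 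This is the paper's Lemma~\ref{analytic}, proved by a Cantor-scheme induction over the tree $\w^{<\w}$ applied to a continuous surjection from a Polish space onto $A$: it produces a $G_\delta$-set $G\subset A$ with $\mu(G)=0$ such that $\mu|\bar G$ is strictly positive. Only with $\bar G$ in hand can one enlarge it by ergodicity to exceed $\mu(\bar N)$ and then run the embedding of the pair $(\bar N,\tilde N)$ into $(\bar G,G)$ --- and even that embedding (the paper's Lemma~\ref{haplyk}) cannot match clopen measures exactly, since the two subspaces have different clopen values sets; it has to work with approximate inequalities $\mu(U)<\lambda(\xi_n(U))<(1+2^{-n})\mu(U)$ and yields an embedding rather than a homeomorphism. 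Your proposal neither states nor proves a structure result of the type of Lemma~\ref{analytic}, and without it case (i) does not get off the ground. The remaining cases (ii), (iii) and the final argument for $[X]^{\le\mathfrak c}$ are correct in outline and correspond to the paper's Lemmas~\ref{l:N}, \ref{l:M} and \ref{l:MN}.
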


This theorem will be proved in Section~\ref{s4} after some preparatory work made in Sections~\ref{s2} and \ref{s3}. It should be mentioned that Theorem~\ref{main} is specific for good Cantor measure spaces and cannot be generalized to any  Cantor measure space.

\begin{example} There is a Cantor measure space $(X,\mu)$ having trivial automorphism group $\HH_\mu(X)$. On this measure space each ideal is invariant. Consequently, the Cantor measure space $(X,\mu)$ supports $2^{\mathfrak c}$ pairwise distinct invariant $\sigma$-ideals with Borel base.
\end{example}

\begin{proof} By \cite{Akin} there exists a Cantor measure space $(X,\mu)$ with trivial automorphism group $\HH_\mu(X)$. Choose a family $\mathcal C$ of cardinality continuum consisting of pairwise disjoint Cantor sets in $X$. Each subfamily $\A\subset\C$ generates an invariant $\sigma$-ideal $\langle\A\rangle$ with Borel base (more precisely, a base consisting of $F_\sigma$-subsets of $X$). It is clear that $\langle\A\rangle\ne\langle\mathcal B\rangle$ for any distinct subfamilies $\A\ne\mathcal B$ in $\C$. Since the family $\mathcal C$ contains $2^{\mathfrak c}$ subfamilies, the measure space $(X,\mu)$ supports $2^{\mathfrak c}$  invariant $\sigma$-ideals $\langle\A\rangle$, $\A\subset\C$, with Borel base.
\end{proof}

A measure space $(X,\mu)$ is called {\em minimal} if for each point $x\in X$ its orbit $\{h(x):h\in\HH_\mu(X)\}$ is dense in $X$. If the orbit $\{h(x):h\in\HH_\mu(X)\}$ of any point $x\in X$ coincides with $X$, then the measure space $(X,\mu)$ is called {\em homogeneous}. It is clear that each homogeneous measure space is minimal. In \cite{Akin} Akin proved that each good Cantor measure space is homogeneous. On the other hand, there are examples \cite{BH} of minimal Cantor measure spaces, which are not good. We do not know if each homogeneous Cantor measure space is good. Because of that the following problem does not reduce to Theorem~\ref{main}.

\begin{problem} Classify invariant $\sigma$-ideals with Borel base on a homogeneous Cantor measure space $(X,\mu)$. Is any non-trivial invariant $\sigma$-ideals with Borel base equal to one of seven standard ideals: $[X]^{\le0}$, $[X]^{\le\w}$, $\mathcal E$, $\M\cap\mathcal N$, $\M$, $\mathcal N$ or $[X]^{\le\mathfrak c}$?
\end{problem}

\section{Homogeneity properties of good Cantor measure spaces}\label{s2}

In this section we shall establish some homogeneity properties of good Cantor measure spaces. First we fix some notation.

Let $(X,d)$ be a metric space. For a point $x\in X$ and real number $\e>0$ by $O_\e(x)=\{y\in X:d(y,x)<\e\}$ we denote the open $\e$-ball centered at $x$. For a subset $A\subset X$ let $O_\e(A)=\bigcup_{a\in A}O_\e(a)$ be its $\e$-neighborhood and $\diam(A)=\sup(\{0\}\cup\{d(x,y):x,y\in A\})$ be its diameter.
By $\cl_X(A)$ or just $\bar A$ we shall denote the closure of a subset $A\subset X$ in $X$.

For a family $\A$ of subsets of a metric space we put $\mesh\A=\sup_{A\in\A}\diam(A)$. A family $\A$ of sets is {\em disjoint} if $A\cap B=\emptyset$ for any distinct sets $A,B\in\A$.
By a {\em partition} of a set $X$ we understand any disjoint family $\A$ of sets such that $\bigcup\A=X$.
 A family $\A$ of subsets of a topological space $X$ is called {\em clopen} if each set $A\in\A$ is non-empty and clopen in $X$.

A topological space $X$ is called {\em zero-dimensional} if clopen sets form a base of the topology of $X$. It is well-known that for any cover $\U$ of a zero-dimensional separable metrizable space $X$ there is a clopen partition $\V$ of $X$ {\em inscribed} into $\U$ in the sense that each set $V\in\V$ is contained in some set $U\in\U$.

For a Borel measure $\mu:\mathcal B(X)\to[0,\infty)$ defined on the $\sigma$-algebra $\mathcal B(X)$ of Borel subsets of $X$ and any Borel subset $Y\subset X$ by $\mu|Y$ we shall denote the restriction of $\mu$ to the $\sigma$-algebra $\mathcal B(Y)\subset\mathcal B(X)$ of Borel subsets of $Y$. For a Borel measure $\mu$ on a topological space $X$ the {\em support} $\mathrm{supp}(\mu)$ of $\mu$ is the (closed) set of all points $x\in X$ such that each neighborhood $O_x\subset X$ of $x$ has measure $\mu(O_x)>0$. It is well-known that each $\sigma$-additive Borel measure $\mu$ on a compact metrizable space $X$ is {\em regular} in the sense that for any Borel subset $B\subset X$ and any $\e>0$ there is a closed subset $K\subset B$ of $X$ such that $\mu(B\setminus K)<\e$.

For future references let us write down a fundamental result of Akin \cite{Akin}.

\begin{lemma}[Akin]\label{akin} Two good Cantor measure spaces $(X,\mu)$ and $(Y,\lambda)$ are isomorphic if and only if $\mu(\Clop(X))=\lambda(\Clop(Y))$.
\end{lemma}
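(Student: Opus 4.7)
The plan is a standard back-and-forth construction of a measure-preserving homeomorphism. The ``only if'' direction is immediate: a measure-preserving homeomorphism sends clopen sets bijectively to clopen sets and preserves their measures, hence $\mu(\Clop(X))=\lambda(\Clop(Y))$. So assume henceforth that $S:=\mu(\Clop(X))=\lambda(\Clop(Y))$, and fix compatible metrics $d_X$, $d_Y$ on $X$, $Y$.

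First I would record the arithmetic of $S$ forced by the Subset Condition. If $a\geq b$ in $S$ and $U$ is clopen with $\mu(U)=a$, then by the Subset Condition there is a clopen $W\subset U$ with $\mu(W)=b$, and $U\setminus W$ witnesses $a-b\in S$. In particular $\mu(X)=\lambda(Y)=\max S$. Moreover, for any nonempty clopen $U\subset X$ the restriction $(U,\mu|U)$ is again a good Cantor measure space with clopen values set exactly $S\cap[0,\mu(U)]$, and similarly for $(Y,\lambda)$.

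The combinatorial heart is a refinement matching lemma. Suppose $U\subset X$ and $V\subset Y$ are clopen with $\mu(U)=\lambda(V)$, and $\mathcal A=\{A_1,\ldots,A_p\}$, $\mathcal B=\{B_1,\ldots,B_q\}$ are clopen partitions of $U$ and $V$. I would produce clopen refinements $\mathcal A'$ of $\mathcal A$ and $\mathcal B'$ of $\mathcal B$ together with a measure-preserving bijection $\psi:\mathcal A'\to\mathcal B'$. The construction is greedy: maintain pointers to the leftmost unfinished $A_i$ and $B_j$; let $m$ be the smaller of the two current residual measures (which lies in $S$ by the difference closure just established); use the Subset Condition on each side to slice off a clopen piece of measure $m$; pair the two slices; shrink the residues; iterate until both sides are exhausted simultaneously. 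The resulting pieces form the desired refinements.

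With the matching lemma in hand, I would inductively build clopen partitions $\mathcal U_n$ of $X$, $\mathcal V_n$ of $Y$ and measure-matching bijections $\varphi_n:\mathcal U_n\to\mathcal V_n$ such that $\mathcal U_{n+1}$ refines $\mathcal U_n$, $\mathcal V_{n+1}$ refines $\mathcal V_n$, $\varphi_{n+1}$ is compatible with $\varphi_n$, and $\mesh\mathcal U_n,\mesh\mathcal V_n<1/n$. At stage $n+1$ I first pass to arbitrary clopen refinements of mesh $<1/(n+1)$ (available by zero-dimensionality of $X$ and $Y$), then apply the matching lemma inside each pair $(U,\varphi_n(U))$ separately to reconcile the two refinements into a common bijection. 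The map $h:X\to Y$ defined by $\{h(x)\}=\bigcap_n\varphi_n(U_n(x))$, where $U_n(x)\in\mathcal U_n$ is the atom containing $x$, is then a homeomorphism by the mesh bounds; and since $\bigcup_n\mathcal U_n$ generates $\mathcal B(X)$ as a $\sigma$-algebra and $\mu(U)=\lambda(h(U))$ on every atom, a standard $\pi$-$\lambda$ argument promotes this equality to all of $\mathcal B(X)$. The main obstacle is the matching lemma: one must ensure that every residual measure arising in the greedy procedure still belongs to $S$, so that the Subset Condition can actually be invoked, and this is precisely the difference closure of $S$; beyond that, everything is routine back-and-forth bookkeeping.
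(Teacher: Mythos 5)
The paper gives no proof of Lemma~\ref{akin}: it is quoted as Akin's Theorem~2.9 of \cite{Akin}, so there is no internal argument to compare yours against. Your proof is correct and is essentially the standard (and Akin's own) argument. The only genuinely nontrivial step is the partition-matching lemma, and you correctly isolate the two facts that make the greedy slicing legal: every residual is itself a clopen set, so its measure automatically lies in $S=\mu(\Clop(X))=\lambda(\Clop(Y))$, and the Subset Condition together with $m\in S$ lets you carve a clopen subset of measure $m$ out of any clopen set of measure $\ge m$ (taking the whole set in the case of equality). The equality of total masses needed to start the induction is exactly your observation that $\mu(X)=\lambda(Y)=\max S$. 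Two small points worth making explicit in a written-up version: (i) strict positivity guarantees every slice has positive measure, so the refined families are clopen partitions in the paper's sense (all members non-empty); (ii) in the limit one has $h(U)=\varphi_n(U)$, not merely $h(U)\subset\varphi_n(U)$, because the $\mathcal{V}_n$ are partitions and $h$ is a bijection --- this is what feeds the $\pi$-$\lambda$ argument. Your refinement scheme is the same template the paper itself deploys in the proofs of Lemmas~\ref{exmes} and \ref{haplyk}, so it fits the surrounding text well.
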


We shall need the following improvement of the Subset Condition in good Cantor measure spaces.

\begin{lemma}\label{scond} Let $(X,\mu)$ be a good Cantor measure space, $U\subset X$ be a clopen set and $K\subset U$ be a compact subset. For every $\alpha\in \mu(\Clop(X))$ with $\mu(K)<\alpha\le \mu(U)$ there is a clopen subset $V\subset U$ such that $K\subset V$ and $\mu(V)=\alpha$.
\end{lemma}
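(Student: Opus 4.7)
The strategy is to write $V=W\cup V'$, where $W$ is a clopen neighborhood of $K$ in $U$ with $\mu(W)$ slightly below $\alpha$, and $V'$ is a clopen subset of $U\setminus W$ realizing the missing measure $\alpha-\mu(W)$ exactly. The outer approximation of $K$ by clopens inside $U$ is available because $X$ is zero-dimensional and $K$ is compact: every open neighborhood of $K$ in $U$ admits a finite clopen refinement, whose union is a clopen neighborhood of $K$ inside $U$. Combined with the regularity of $\mu$, this produces a clopen $W$ with $K\subset W\subset U$ and $\mu(W)$ as close to $\mu(K)$ as desired; since $\mu(K)<\alpha$, fix such $W$ with $\mu(W)<\alpha$.

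The main step is to show that $\beta:=\alpha-\mu(W)$ belongs to the clopen values set $S:=\mu(\Clop(X))$, so that the Subset Condition can then be invoked to find $V'\subset U\setminus W$ with $\mu(V')=\beta$. The key auxiliary observation, to be extracted from goodness, is that $S$ is closed under positive differences. Closure under complements, $s\in S\Rightarrow\mu(X)-s\in S$, is immediate. Closure under sums bounded by $\mu(X)$ follows from the Subset Condition: given $s,t\in S$ with $s+t\le\mu(X)$, fix clopen $A$ with $\mu(A)=s$; since $t\le\mu(X\setminus A)$ and $t\in S$, the Subset Condition yields a clopen $B\subset X\setminus A$ with $\mu(B)=t$ (if $t=\mu(X\setminus A)$ take $B:=X\setminus A$), and then $A\cup B$ is clopen of measure $s+t$. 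Combining the two, $s-t=\mu(X)-\bigl((\mu(X)-s)+t\bigr)\in S$ whenever $s\ge t$. Applying this with $s=\alpha$ and $t=\mu(W)$ gives $\beta\in S$.

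Finally, if $\beta=\mu(U\setminus W)$ (equivalently $\alpha=\mu(U)$) set $V':=U\setminus W$. Otherwise $\beta<\mu(U\setminus W)$, and since $\beta\in S$ one picks any clopen set of measure $\beta$ in $X$ and pushes it, via the Subset Condition, into $U\setminus W$ to get a clopen $V'\subset U\setminus W$ with $\mu(V')=\beta$. Then $V:=W\cup V'$ is clopen with $K\subset W\subset V\subset U$ and $\mu(V)=\mu(W)+\beta=\alpha$, as required. The main obstacle is the algebraic closure step of the second paragraph: once $S$ is known to be closed under positive differences, the Subset Condition delivers the geometric piece and the rest is bookkeeping.
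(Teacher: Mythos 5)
Your proposal is correct and follows essentially the same route as the paper: both decompose $V$ as a clopen neighborhood $W$ of $K$ with $\mu(W)<\alpha$ (obtained from compactness, zero-dimensionality and regularity) together with a clopen piece of measure $\alpha-\mu(W)$ found inside $U\setminus W$ via the Subset Condition. The only cosmetic difference is how you certify $\alpha-\mu(W)\in\mu(\Clop(X))$: the paper takes a clopen $A$ of measure $\alpha$ and removes a clopen subset of measure $\mu(W)$ directly, whereas you pass through closure of the clopen values set under complements and bounded sums --- both are valid and amount to the same use of the Subset Condition.
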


\begin{proof} If $\alpha=\mu(U)$, then we can put $V=U$. So, assume that $\alpha<\mu(U)$. By the regularity of the measure $\mu$ and the zero-dimensionality of the space $X$, there is a clopen neighborhood $O_K\subset U$ of $K$ such that $\mu(O_K)<\alpha$. Since $\alpha\in\mu(\Clop(X))$, there is a clopen set $A\subset X$ with $\mu(A)=\alpha$. By the Subset Condition, the set $A$ contains a clopen subset $B\subset A$ of measure $\mu(B)=\mu(O_K)$. Then the clopen set $A\setminus B$ has measure $\beta=\mu(A\setminus B)=\alpha-\mu(O_K)$. Since $\mu(U\setminus O_K)>\alpha-\mu(O_K)=\beta$, by the Subset Condition, the clopen set $U\setminus O_K$ contains a clopen set $W$ of measure $\mu(W)=\beta$. Then the clopen set $V=O_K\cup W\subset U$ contains $K$ and has measure $\mu(V)=\mu(O_K)+\mu(W)=\alpha$.
\end{proof}

We shall need the following well-known homogeneity property of the Cantor cube (which is attributed to Ryll-Nardzewski in \cite{KR}).

\begin{lemma}\label{extop} Any homeomorphism $f:A\to B$ between closed nowhere dense subsets $A,B\subset X$ of the Cantor cube $X=\{0,1\}^\w$ extends to a homeomorphism $\bar f:X\to X$ of $X$.
\end{lemma}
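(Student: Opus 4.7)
My plan is to extend $f$ by a standard back-and-forth construction that builds compatible clopen partitions of the source and target copies of $X$ with mesh tending to zero. Concretely, I will inductively construct clopen partitions $\mathcal{U}_n = \{U^n_s : s\in S_n\}$ and $\mathcal{V}_n = \{V^n_s : s\in S_n\}$ of $X$, indexed by the same finite set $S_n$, so that for every $n$:
(i) $\mesh\,\mathcal{U}_n < 2^{-n}$ and $\mesh\,\mathcal{V}_n < 2^{-n}$;
(ii) $f(U^n_s\cap A) = V^n_s\cap B$ for every $s\in S_n$;
(iii) $\mathcal{U}_{n+1}$ refines $\mathcal{U}_n$ and $\mathcal{V}_{n+1}$ refines $\mathcal{V}_n$, with the indexing at level $n+1$ projecting naturally onto that at level $n$. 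Once such a tower is built, each $x\in X$ lies in a nested sequence $U^n_{s_n(x)}$, and the matching sequence $V^n_{s_n(x)}$ has diameters going to $0$, hence intersects in a single point $\bar f(x)$. Symmetry of the construction makes $\bar f$ a bijection, property (iii) makes it continuous, and compactness of $X$ upgrades it to a homeomorphism; property (ii) forces $\bar f|_A = f$.

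The heart of the argument is the refinement step: given a matched pair $(U,V)$ with $f(U\cap A) = V\cap B$, produce matched clopen partitions of $U$ and $V$ of mesh less than a prescribed $\e>0$. I would first choose a clopen partition of $U\cap A$ into pieces of diameter $<\e$, refine it by intersecting with the $f$-preimages of an analogous partition of $V\cap B$, and obtain $A_1,\dots,A_k\subset A\cap U$ together with $B_j:=f(A_j)\subset B\cap V$, both families being pairwise disjoint, clopen in $A$, resp.\ $B$, and of diameter $<\e$. Because $A$ is closed in $X$, each $A_j$ is closed in $X$, and by zero-dimensionality I can separate $A_j$ from the closed set $A\setminus A_j$ by a clopen neighborhood $U_j\subset U$ of diameter $<\e$; iterated subtraction makes the $U_j$'s pairwise disjoint while preserving $A_j\subset U_j$, since the subtracted pieces avoid $A_j$. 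The same procedure on the $V$-side produces pairwise disjoint clopen $V_j\subset V$ with $V_j\cap B = B_j$ and $\diam V_j < \e$.

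It remains to handle the ``remainders'' $R_U := U\setminus\bigcup_j U_j$ and $R_V := V\setminus\bigcup_j V_j$, which are clopen and disjoint from $A$, resp.\ $B$. The nowhere-denseness of $A$ guarantees that inside $U$ the set $A\cap U$ has no interior, so by enlarging the $U_j$'s slightly (if needed) I can assume $R_U$ is a nonempty clopen set; symmetrically for $R_V$. Any nonempty clopen subset of $X$ has no isolated points and is therefore itself a Cantor set, so $R_U$ and $R_V$ can each be partitioned into the same arbitrarily large finite number of nonempty clopen pieces of diameter $<\e$, which I then match arbitrarily and adjoin to the $U_j,V_j$ to produce the required matched refinement.

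The main obstacle I expect is the bookkeeping in this refinement step, specifically making sure the remainders on the two sides can always be matched; this is where the \emph{nowhere dense} hypothesis on $A$ and $B$ is essential, since it is precisely what guarantees that the clopen sets outside $A$ and outside $B$ are large enough (and in fact are Cantor sets) to carry matching partitions. The rest of the proof is a routine limit argument using the compactness of $X$ and the fact that nested clopen sets of vanishing diameter define a continuous bijection between two zero-dimensional compact metric spaces.
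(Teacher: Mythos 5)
The paper offers no proof of this lemma: it is quoted as a known result from Knaster--Reichbach (attributed there to Ryll-Nardzewski), so the only thing to compare against is the paper's proof of the measure-preserving analogue, Lemma~\ref{exmes}, which runs exactly your back-and-forth scheme of matched clopen partitions of vanishing mesh, with an extra layer of measure bookkeeping on top. Your argument is the standard proof and it is correct: the common-refinement trick makes both $A_j$ and $f(A_j)$ small; the clopen $U_j\subset U$ with $U_j\cap A=A_j$ and $\diam U_j<\e$ exists because $A_j$ and $A\setminus A_j$ are disjoint compacta in a zero-dimensional space; and nowhere density of $A$ and $B$ is precisely what lets you match the remainders. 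One word is backwards, though: to guarantee $R_U\ne\emptyset$ you must \emph{shrink} the $U_j$'s, not enlarge them --- for instance, fix in advance a nonempty clopen $W\subset U\setminus A$ (which exists because $A\cap U$ is closed with empty interior in $U$) and replace each $U_j$ by $U_j\setminus W$; this keeps $A_j\subset U_j$ and forces $W\subset R_U$. With that correction the refinement step and the limit argument both go through.
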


Our next lemma can be considered as a measure-topological counterpart of Lemma~\ref{extop}. It implies the transitivity of good Cantor measure spaces, proved by Akin in \cite[2.9]{Akin}.

\begin{lemma}\label{exmes} Any measure-preserving homeomorphism
$f:A\to B$ between closed nowhere dense subsets $A,B\subset X$ of a good Cantor measure space $(X,\mu)$ extends to a measure-preserving homeomorphism $f:X\to X$ of $X$.
\end{lemma}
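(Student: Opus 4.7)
The plan is to construct $\bar f$ by coordinating, by induction on $n$, two clopen partitions $\mathcal P_n = \{P^n_i : i \in I_n\}$ and $\mathcal Q_n = \{Q^n_i : i \in I_n\}$ of $X$ indexed by a common finite set $I_n$, satisfying: $\mu(P^n_i) = \mu(Q^n_i)$ and $f(P^n_i \cap A) = Q^n_i \cap B$ for every $i$; the refinements $\mathcal P_{n+1} \to \mathcal P_n$ and $\mathcal Q_{n+1} \to \mathcal Q_n$ share the same combinatorial structure; any piece disjoint from $A$ is copied to all later levels without further splitting; and for $n \geq 1$ every piece meeting $A$ has diameter less than $1/n$ (and likewise for its partner). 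Starting from $\mathcal P_0 = \mathcal Q_0 = \{X\}$, the inductive step reduces to a single refinement lemma for a matched pair $(P, Q)$ with $P \cap A \neq \emptyset$: given $\e > 0$, split $P$ into clopen pieces $\tilde U_1, \ldots, \tilde U_k, R$ and $Q$ into matching $\tilde U'_1, \ldots, \tilde U'_k, R'$ so that $\diam(\tilde U_j), \diam(\tilde U'_j) < \e$, $\mu(\tilde U_j) = \mu(\tilde U'_j)$, $f(\tilde U_j \cap A) = \tilde U'_j \cap B$, and the remainders $R, R'$ are disjoint from $A$ and $B$.

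For the refinement lemma, first choose a clopen partition $\{A'_1, \ldots, A'_k\}$ of $P \cap A$ into pieces so small that $\diam(A'_j)$ and $\diam(f(A'_j))$ are all $< \e/3$ (obtained by relativising a common refinement of fine clopen partitions of $P$ and $Q$ transported through $f$). Using that each $A'_j$ is clopen in $A$ while $A \setminus A'_j$ is closed in $X$, choose pairwise disjoint clopen sets $U_j \subset P$ with $U_j \cap A = A'_j$ and $\diam(U_j) < \e$, and perform the symmetric construction on the $Q$-side to get $U'_j \subset Q$ with $U'_j \cap B = f(A'_j)$ and $\diam(U'_j) < \e$. The crucial measure-matching step exploits the nowhere-density of $A$: since $\mu$ is strictly positive and $U_j \setminus A$, $U'_j \setminus B$ are non-empty open sets, $\mu(A'_j) < \mu(U_j)$ and $\mu(f(A'_j)) < \mu(U'_j)$; by the density of $\mu(\Clop(X))$ in $[0, \mu(X)]$ (a standard consequence of continuity and zero-dimensionality), one can pick $\tau_j \in \mu(\Clop(X))$ with $\mu(A'_j) \leq \tau_j \leq \min(\mu(U_j), \mu(U'_j))$. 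Lemma~\ref{scond} applied inside $U_j$ with $K = A'_j$ then yields a clopen $\tilde U_j \subset U_j$ containing $A'_j$ with $\mu(\tilde U_j) = \tau_j$, and similarly $\tilde U'_j \subset U'_j$; the leftovers $R = P \setminus \bigsqcup_j \tilde U_j$ and $R' = Q \setminus \bigsqcup_j \tilde U'_j$ are then automatically clopen, disjoint from $A$ and $B$, and of common measure $\mu(P) - \sum_j \tau_j$.

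Having completed the induction, set $\bar f|A := f$, and for $x \in X \setminus A$ note that the diameter bound forces $x$ eventually to land in a piece $P = P^n_i$ disjoint from $A$, which is then preserved at all higher levels. The matched piece $Q = Q^n_i$ is clopen in $X$ with $\mu(Q) = \mu(P)$, and since any clopen $C \subset X$ is itself a good Cantor measure space with $\mu(\Clop(C)) = \mu(\Clop(X)) \cap [0, \mu(C)]$ (again by Lemma~\ref{scond}), Lemma~\ref{akin} furnishes a measure-preserving homeomorphism $g_P : P \to Q$; set $\bar f|P := g_P$. Continuity of $\bar f$ at $a \in A$ follows from the diameter control: for $n > 1/\e$ every point of $P^n_i \ni a$ is sent into $Q^n_i \ni f(a)$, a set of diameter $< \e$; continuity off $A$ is immediate from the local identification with $g_P$. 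By the symmetric roles of $(A, \mathcal P_n)$ and $(B, \mathcal Q_n)$, $\bar f^{-1}$ is continuous as well, and measure preservation holds by decomposing any Borel $E \subset X$ along $A$ and the stable pieces, on each of which $\bar f$ restricts to a measure-preserving map. The main obstacle is the simultaneous control of measure and diameter in the refinement step: enlarging tiny clopen neighbourhoods of $A'_j$ and $f(A'_j)$ to a common measure while keeping them inside pre-chosen small-diameter ``cages'' $U_j, U'_j$ is exactly the job of Lemma~\ref{scond}, with the strict inequality $\mu(A'_j) < \min(\mu(U_j), \mu(U'_j))$ coming from the nowhere-density of $A$ being what makes the common target $\tau_j \in \mu(\Clop(X))$ available.
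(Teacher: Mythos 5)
Your proof is correct, and its skeleton is the same as the paper's: an inductive construction of matched finite clopen partitions with equal measures on corresponding pieces and with $f(P\cap A)=Q\cap B$, using Lemma~\ref{scond} (together with the nowhere-density of $A$, $B$ and the strict positivity of $\mu$) to equalize the measures of small clopen ``cages'' around corresponding pieces of $A$ and $B$, and Akin's Lemma~\ref{akin} to produce measure-preserving homeomorphisms between the leftover clopen pieces. There are two genuine deviations in execution. First, the paper begins each inductive step by invoking the Knaster--Reichbach extension Lemma~\ref{extop} to get a purely topological extension $\bar f_U:U\to V$, which it then uses to transport a fine partition of $U$ to one of $V$; you bypass this entirely by partitioning $P\cap A$ into relatively clopen pieces $A'_j$ that are simultaneously small and have small $f$-images (via a common refinement pulled back through $f$), and building the cages on the two sides independently. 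This is a small but real simplification. Second, the paper keeps refining the complementary pieces $U^\circ$, $V^\circ$ at every stage so that the mesh of the full partitions tends to $0$ and the limit map is defined uniformly by nested intersections; you instead freeze each piece disjoint from $A$ as soon as it appears and define $\bar f$ on it at the end by a single application of Lemma~\ref{akin}. The price is that continuity of $\bar f$ at points of $A$ and bijectivity must be checked separately from the stable-piece description, and you do supply the needed argument (diameter control on pieces meeting $A$, plus the observation that a piece of $\mathcal P_n$ is disjoint from $A$ iff its partner is disjoint from $B$). Your appeal to the density of $\mu(\Clop(X))$ in $[0,\mu(X)]$ to find the common target value $\tau_j$ is legitimate (continuity of $\mu$ gives finite clopen partitions of arbitrarily small measure-mesh, whose partial sums are $\e$-dense), and is in fact also implicitly needed in the paper's choice of $U_W,V_W$ of equal measure; only note that Lemma~\ref{scond} requires the strict inequality $\mu(A'_j)<\tau_j$, which the open interval $(\mu(A'_j),\min(\mu(U_j),\mu(U'_j)))$ provides.
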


\begin{proof} Fix any metric $d$ on the space $X$ generating the topology of $X$.

Let $\U_0=\V_0=\{X\}$ and $\xi_0:\U_0\to\V_0$ be the unique bijective map. By induction, for every $n\in\IN$ we shall construct two finite clopen partitions $\U_n$ and $\V_n$ of $X$, a bijective map $\xi_n:\U_n\to\V_n$, and two maps $p_n:\U_n\to\U_{n-1}$ and $\pi_n:\V_n\to\V_{n-1}$ such that the following conditions are satisfied:
\begin{itemize}
\item[$(1_n)$] $\mesh\U_n<1/2^n$ and $\mesh\V_n<1/2^n$;
\item[$(2_n)$] for any sets $U\in\U_n$ and $V\in\V_n$ we get  $U\subset p_n(U)\in\U_{n-1}$ and $V\subset\pi_n(V)\in\V_{n-1}$;
\item[$(3_n)$] $\pi_{n}\circ\xi_{n}=\xi_{n-1}\circ p_{n}$;
\item[$(4_n)$] $\mu(\xi_n(U))=\mu(U)$ for any set $U\in\U_n$;
\item[$(5_n)$] $f(U\cap A)=\xi_n(U)\cap B$ for every set $U\in\U_n$.
\end{itemize}

Assume that for some $n\in\IN$ and all $k<n$ the partitions $\U_k$, $\V_k$ and maps $\xi_k:\U_k\to\V_k$, $p_k:\U_k\to\U_{k-1}$, $\pi_k:\V_k\to\V_{k-1}$ satisfying the conditions $(1_k)$--$(5_k)$ have been constructed.

Fix any subset $U\in\U_{n-1}$ and consider the clopen set $V=\xi_{n-1}(U)\in\V_{n-1}$. By the inductive assumption $(5_{n-1})$, $f(A\cap U)=B\cap V$. By Lemma~\ref{extop}, the homeomorphism $f|A\cap U:A\cap U\to B\cap V$ extends to a homeomorphism $\bar f_U:U\to V$. Choose a disjoint clopen partition $\W_U$ of the space $U$ such that $\mesh\W_U<2^{-n}$ and $\mesh \bar f_U(\W_U)<2^{-n}$ where $\bar f_U(\W_U)=\{\bar f_U(W):W\in\W_U\}$. Let $\W'_U=\{W\in\W_U:W\cap A\ne\emptyset\}$. Applying Lemma~\ref{scond}, for every set $W\in \W'_U$, choose two clopen sets $U_W,V_W\subset X$ such that $W\cap A\subset U_W\subset W$, $f(W\cap A)\subset V_W\subset f(W)$ and $\mu(U_W)=\mu(V_W)$. It follows that the sets
$U^\circ=U\setminus\bigcup_{W\in\W_U'}U_W$ and $V^\circ=V\setminus\bigcup_{W\in\W_U'}V_W$ have the same measure $\mu(U^\circ)=\mu(V^\circ)>0$. By Lemma~\ref{akin}, the good Cantor measure spaces $(U^\circ,\mu|U^\circ)$ and $(V^\circ,\mu|V^\circ)$ are isomorphic. Consequently, there exists a measure-preserving homeomorphism $h_U:U^\circ\to V^\circ$.
Choose any clopen partition $\C_U$ of the Cantor set $U^\circ$ such that $\mesh(\C_U)<2^{-n}$ and $\mesh(h_U(\C_U))<2^{-n}$. Let $\U_n(U)=\C_U\cup\{U_W:W\in \W'(U)\}$, $\V_n(U)=h_U(\C_U)\cup\{V_W:W\in\W'(U)\}$ and define the bijective map $\xi_{U,n}:\U_n(U)\to\V_n(U)$ by the formula
$$\xi_{U,n}(U')=\begin{cases}V_W&\mbox{if $U'=U_W$ for some $W\in\W'(U)$},\\
h_U(U')&\mbox{if $U'\in\C_U$}.
\end{cases}
$$

Now consider the clopen partitions $\U_n=\bigcup_{U\in\U_{n-1}}\U_n(U)$ and $\V_n=\bigcup_{U\in\U_{n-1}}\V_n(U)$ of $X$ and let $\xi_n:\U_n\to\V_n$ be the bijective function such that $\xi_n|\U_n(U)=\xi_{U,n}$ for any $U\in\U_{n-1}$.

Let $p_n:\U_n\to\U_{n-1}$ be the map assigning to each set $U'\in\U_n$ the unique set $U\in\U_{n-1}$ such that $U'\in\U_n(U)$ and hence $U'\subset U$. Also consider the map $\pi_n:\V_n\to\V_{n-1}$ assigning to each set $V'\in\V_n$ the unique set $V\in\V_{n-1}$ that contains $V'$. It is easy to see that the partitions $\U_n$, $\V_n$, and functions $\xi_n$, $p_n$, and $\pi_n$ satisfy the conditions $(1_n)$--$(5_n)$.

After completing the inductive construction, define a measure-preserving homeomorphism $\bar f:X\to X$ assigning to each point $x\in X$ the unique point of the intersection $\bigcap_{n\in\w}\{\xi_n(U):x\in U\in\U_n\}$. The conditions $(5_n)$, $n\in\IN$, guarantee that $\bar f|A=f$.
\end{proof}

\begin{lemma}\label{haplyk} Let $(X,\mu)$, $(Y,\lambda)$ be Cantor measure spaces such that $\mu(X)<\lambda(Y)$ and the measure $\lambda$ is strictly positive. Let $G_X\subset X$ and $G_Y\subset Y$ be two $G_\delta$-sets of measure $\mu(G_X)=\lambda(G_Y)=0$ such that $G_Y$ is dense in $Y$. Then there is a measure-preserving embedding $f:X\to Y$ such that $f(G_X)\subset G_Y$.
\end{lemma}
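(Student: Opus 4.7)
The plan is to adapt the clopen inductive scheme from the proof of Lemma~\ref{exmes}. I construct refining clopen partitions $\U_n$ of $X$, disjoint clopen families $\V_n$ in $Y$ with $\mesh\U_n,\mesh\V_n<2^{-n}$, and bijections $\xi_n\colon\U_n\to\V_n$ commuting with the refinement maps, and set $f(x):=\bigcap_n\xi_n(U_n(x))$, where $U_n(x)\in\U_n$ is the cell containing $x$. Since $(Y,\lambda)$ is not assumed to be good, I cannot invoke Lemma~\ref{akin} for exact measure matching at each level; instead I enforce only the one-sided inequality $\lambda(\xi_n(U))\ge\mu(U)$ and arrange the total excess $\sum_{U\in\U_n}(\lambda(\xi_n(U))-\mu(U))$ to decay geometrically from the initial slack $\lambda(Y)-\mu(X)>0$. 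In the limit $\lambda(f(U))=\mu(U)$ for every clopen $U\subset X$, so $\lambda\circ f=\mu$ on Borel sets; while $\mesh\V_n\to 0$ together with disjointness of $\V_n$ makes $f$ a topological embedding.

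To secure $f(G_X)\subset G_Y$, I fix an open decreasing sequence $X\supset A_0\supset A_1\supset\cdots\supset G_X$ with $\mu(A_k)\to 0$ (by outer regularity of $\mu$) and build an open decreasing sequence $Y\supset B_0\supset B_1\supset\cdots\supset G_Y$ with $\bigcap_n B_n=G_Y$, coordinated so that $\mu(A_k)\ll\lambda(B_k)$ for every $k$. For each $U\in\U_n$ put $k(U):=\max\{k\le n:U\subset A_k\}$, and impose the set condition
\[
\xi_n(U)\subset B_{k(U)}
\]
throughout the construction. A point $x\in G_X$ belongs to every $A_k$, and since each $A_k$ is open, a clopen neighborhood of $x$ is eventually contained in $A_k$; hence $k(U_n(x))\to\infty$ and therefore $f(x)\in\bigcap_k B_k=G_Y$.

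The main technical hurdle is the simultaneous maintenance of the excess decay and the set condition across the refinement step. For this I maintain the measure invariant
\[
\lambda\bigl(\xi_n(U)\cap B_k\bigr)\ge\mu(U\cap A_k)+\epsilon_{n,k}
\]
for every $U\in\U_n$ and every $k\le n$, with positive slacks $\epsilon_{n,k}\to 0$. Because $\sum_{U'\subset U}\mu(U'\cap A_k)=\mu(U\cap A_k)$, the parent invariant suffices for the children provided $\epsilon_{n+1,k}\cdot|\U_{n+1}|\le\epsilon_{n,k}$; this lets me partition each $\xi_n(U)$ clopenly among its children $\xi_{n+1}(U')$ so that $\xi_{n+1}(U')\subset\xi_n(U)\cap B_{k(U')}$, by a clopen sub-partition of $\xi_n(U)$ of very small mesh and measure followed by greedy grouping as in Lemma~\ref{exmes}. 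The $B_n$'s are built adaptively: having defined $\V_n$, I set $B_{n+1}:=B_n\setminus F_n$ where $F_n$ is a closed subset of $B_n\setminus G_Y$ chosen small enough (by inner regularity of $\lambda$ on $B_n\setminus G_Y$) that the new invariant at $k=n+1$ is met on every $\xi_n(U)$. To simultaneously force $\bigcap_n B_n=G_Y$, I fix an $F_\sigma$-decomposition $Y\setminus G_Y=\bigcup_j F_j$, break each $F_j$ into closed pieces of vanishing $\lambda$-measure (using continuity of $\lambda$, compactness, and zero-dimensionality of $Y$), and enumerate the pieces so that one is absorbed into $F_n$ at each step; coordinating the excess decay, the slacks $\epsilon_{n,k}$, the coverings $A_k$, and this diagonal exhaustion of $Y\setminus G_Y$ is the principal bookkeeping challenge, but the inequalities involved are consistent by design, so the induction closes and delivers the desired embedding.
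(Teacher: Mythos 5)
Your proposal follows essentially the same route as the paper's proof: refining clopen partitions $\U_n$ of $X$, disjoint clopen families $\V_n$ in $Y$ with bijections $\xi_n$ compatible with refinement, measure preservation via a one-sided inequality $\lambda(\xi_n(U))\ge\mu(U)$ with vanishing total excess (the paper uses the multiplicative form $\mu(U)<\lambda(\xi_n(U))<(1+2^{-n})\mu(U)$ together with a separate clause for null cells), and the inclusion $f(G_X)\subset G_Y$ secured by routing cells that sit deep in a filtration around $G_X$ into the corresponding stage of a filtration around $G_Y$. Your open decreasing sequences $A_k\supset G_X$ and $B_k\supset G_Y$ are exactly the complements of the paper's increasing compact exhaustions of $X\setminus G_X$ and $Y\setminus G_Y$, and your condition $\xi_n(U)\subset B_{k(U)}$ is the complemented form of the paper's condition $(6_n)$. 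The one substantive difference is organizational, and it is where your write-up is soft: you maintain the invariant $\lambda(\xi_n(U)\cap B_k)\ge\mu(U\cap A_k)+\epsilon_{n,k}$ for \emph{all} $k\le n$ simultaneously, which turns each refinement step into a multi-constraint allocation problem --- for every child $U'$ and every $k>k(U')$ you must reserve a prescribed amount of measure of $\xi_{n+1}(U')$ inside $B_k$ --- and this is precisely the step you concede as the ``principal bookkeeping challenge'' without verifying feasibility. It is plausibly feasible because the constraints are nested (the demands $\mu(U'\cap A_k)$ decrease in $k$ and the supply sets $\xi_n(U)\cap B_k$ form a chain, so an inside-out greedy allocation of a fine clopen sub-partition should close), but that requires an argument you do not give. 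The paper avoids the issue entirely by tracking only one pair of indices per level: after building $\U_n,\V_n,\xi_n$ it chooses $m_n$ and then $k_n$ adaptively (its conditions $(7_n)$ and $(8_n)$) so that a single constraint suffices at stage $n+1$, while the increasing sequences $k_n,m_n$ still exhaust the filtrations, which is all that is needed for $f(G_X)\subset G_Y$. If you adopt that single-index device, the rest of your argument goes through as written.
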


\begin{proof}
Let $d_X,d_Y$ be metrics of diameter $<1$ generating the topologies of the spaces $X,Y$, respectively. Write the complements $X\setminus G_X$ and $Y\setminus G_Y$ as countable unions $X\setminus G_X=\bigcup_{n\in\w}A_n$ and $Y\setminus G_Y=\bigcup_{n\in\w}B_n$ of increasing sequences $(A_n)_{n\in\w}$ and $(B_n)_{n\in\w}$ of compact sets.

Let $U_0=X$ and choose any clopen subset $V_0\subset Y$ such that $0<\lambda(V_0)<1$ if $\mu(U_0)=0$ and  $\mu(U_0)<\lambda(V_0)<2\cdot \mu(U_0)$ if $\mu(U_0)>0$. Let $\U_0=\{U_0\}$, $\V_0=\{V_0\}$ and $\xi_0:\U_0\to\V_0$ be the unique bijective map. Since $V_0\setminus G_A=\bigcup_{m\in\w}V_0\cap B_m$ and $\lambda(V_0\setminus G_A)=\lambda(V_0)>\mu(U_0)$, there is a number $m_0\in\w$ such that $\mu(V_0\cap B_{m_0})>\mu(U_0)$. Since the set $B_{m_0}$ is nowhere dense in $Y$ and the measure $\lambda$ on $Y$ is strictly positive, $\lambda(V_0\setminus B_{m_0})>0=\mu(U_0\cap G_X)=\mu\big(\bigcap_{k\in\w}U_0\setminus A_k\big)$ and hence we can find a number $k_0\in\w$ such that $\mu(U_0\setminus A_{n_0})<\lambda(V_0\setminus B_{m_0})$.

By induction for every $n\in\IN$ we shall construct two numbers $k_n,m_n\in\IN$, a clopen partition $\U_n$ of $X$, a finite disjoint family $\V_n$ of clopen subsets of the space $Y$, a
bijective function $\xi_n:\U_n\to\V_n$, and two functions $p_{n}:\U_{n}\to\U_{n-1}$ and $\pi_{n}:\V_{n}\to\V_{n-1}$ such that the following conditions are satisfied:
\begin{itemize}
\item[$(1_n)$] $\mesh\U_n<1/2^n$ and $\mesh\V_n<1/2^n$;
\item[$(2_n)$] for any sets $U\in\U_n$ and $V\in\V_n$ we get  $U\subset p_n(U)\in\U_{n-1}$ and $V\subset\pi_n(V)\in\V_{n-1}$;
\item[$(3_n)$] $\pi_{n}\circ\xi_{n}=\xi_{n-1}\circ p_{n}$;
\item[$(4_n)$] $\mu(U)<\lambda(\xi_n(U))<(1+2^{-n})\cdot\mu(U)$ for any set $U\in\U_n$ with $\mu(U)>0$;
\item[$(5_n)$] $0<\lambda(\xi_n(U))<1/(2^n\cdot|\U_n|)$ for any set $U\in\U_n$ with $\mu(U)=0$;
\item[$(6_n)$] for every $U\in\U_{n}$ with $U\cap A_{k_{n-1}}=\emptyset$, we get $\xi_{n}(U)\cap B_{m_{n-1}}=\emptyset$;
\item[$(7_n)$] $\mu(U)<\lambda(\xi_n(U)\cap B_{m_n})$ for every $U\in\U_n$;
\item[$(8_n)$] $\mu(U\setminus A_{k_n})<\lambda(\xi_n(U)\setminus B_{m_n})$ for every $U\in\U_n$;
\item[$(9_n)$] $k_{n}>k_{n-1}$ and $m_{n}>m_{n-1}$.
\end{itemize}

Assume that for some $n\in\IN$ and every $k<n$ we have constructed  families $\U_{k}$, $\V_{k}$, and maps $\xi_k:\U_k\to\V_k$, $p_k:\U_k\to\U_{k-1}$, $\pi_k:\V_k\to\V_{k-1}$ satisfying the conditions $(1_k)$--$(9_k)$.

For every set $U\in\U_{n-1}$ and its image $V=\xi_{n-1}(U)\in\V_{n-1}$ we shall construct a clopen partition $\U_n(U)$ of $U$, a disjoint family $\V'_n(V)$ of clopen subsets of $V$ and a bijective map $\xi'_{U,n}:\U_n(U)\to\V_n'(V)$ such that the following conditions are satisfied:
\begin{itemize}
\item[$(1_{U,n})$] $\mesh\U_n(U)<1/2^n$ and $\mesh\V'_n(V)<1/2^n$;
\item[$(4_{U,n})$] $\mu(U')<\lambda(\xi'_{U,n}(U'))<(1+2^{-n})\cdot\mu(U')$ for every set $U'\in\U_n(U)$ with $\mu(U')>0$;
\item[$(6_{U,n})$] for every $U'\in\U_{n}(U)$ with $U'\cap A_{k_{n-1}}=\emptyset$ we get $\xi'_{U,n}(U')\cap B_{m_{n-1}}=\emptyset$.
\end{itemize}

Fix any set $U\in\U_{n-1}$ and consider its image $V=\xi_{n-1}(U)\in\V_{n-1}$. %By the inductive assumption, $\mu(U)<\lambda(V)<\mu(U)+1/(2^n\cdot|\U_{n-1}|)$ and $\mu(U\setminus A_{k_{n-1}})<\lambda(V\setminus B_{m_{n-1}})$.

Choose any clopen partition $\U'_n(U)$ of $U$ such that $\mesh\,\U'_n(U)<2^{-n}$. Consider the subfamily  $\U''_n(U)=\{U'\in\U'_n(U):U'\cap A_{k_{n-1}}=\emptyset\}$ and observe that $\mu\big(\bigcup\U''_n(U)\big)\le\mu(U\setminus A_{k_{n-1}})<\lambda(V\setminus B_{m_{n-1}})$ according to the condition $(8_{n-1})$.

Choose a positive real number $\e_U$ such that
\begin{itemize}
\item $\e_U<\min\big(\{2^{-n}\}\cup\{\mu(U')/2^n:U'\in\U_n'(U),\;\mu(U')>0\}\big)$;
\item $\mu(U\setminus A_{k_{n-1}})<\lambda(V\setminus O_{\e_U}(B_{m_{n-1}}))$,
\item $\e_U<\big(\lambda(V\setminus O_{\e_U}(B_{m_{n-1}}))-\mu(U\setminus A_{k_{n-1}})\big)/|\U'_n(U)|$, and
\item $\e_U<(\lambda(V)-\mu(U))/|\U'(U)|$.
\end{itemize}

Next, choose any clopen partition $\V'_{n}(V)$ of $V$ such that each set $V'\in\V'_{n}(V)$ has diameter $\diam(V')<\e_U$ and measure $\lambda(V')<\e_U$. Let $\V''_n(V)=\{V'\in\V'_n(V):V'\cap B_{m_{n-1}}=\emptyset\}$ and observe that
$$V\setminus O_{\e_U}(B_{m_{n-1}})\subset \textstyle{\bigcup\V''_n(V)}\subset V\setminus B_{m_{n-1}}.$$
%and hence $$\lambda(\cup\V_n''(V))\le\lambda(V\setminus B_{m_{n-1}})>\mu(U\setminus A_{k_{n-1}})-\e_U\cdot|\U'_n(U)|$$

To every set $U'\in\U'_{n}(U)$ assign a subfamily $\V'_{n}(U')\subset \V'_{n}(V)$ such that
$$\mu(U')<\lambda\big(\textstyle{\bigcup}\V'_{n}(U')\big)<\mu(U')+\e_U,$$ the families $\V'_{n}(U')$, $U'\in\U'_{n}(U)$, are pairwise disjoint, and $\V'_n(U')\subset\V_n''(V)$ if $U'\in\U''_n(U)$. Such a choice is possible since
$$
\begin{aligned}
\sum_{U'\in\U''_n(U)}(\mu(U')+\e_U)&=
\mu(\textstyle{\bigcup}\U''_n(U))+|\U''(n)|\cdot\e_U\le\mu(U\setminus A_{k_{n-1}})+|\U'_n(U)|\cdot\e_U<\\
&<\lambda\big(V\setminus O_{\e_U}(B_{m_{n-1}})\big)\le\lambda\big(\textstyle{\bigcup}\V_n''(V)\big)
\end{aligned}
$$and
$$\sum_{U'\in\U'_n(U)}(\mu(U')+\e_U)=
\mu(U)+|\U'(n)|\cdot\e_U<\lambda(V).$$

For every set $U'\in\U'_n(U)$ with $\mu(U')>0$ we get
$$\mu(U')<\lambda\big(\textstyle{\bigcup}\V_n''(U')\big)<\mu(U')+\e_U\le\mu(U')+2^{-n}\mu(U').$$
Using the continuity of the measure $\mu$, we can choose a clopen partition $\{U_{V'}\}_{V'\in\V_n'(U')}$ of $U'$ such that
$\mu(U_{V'})<\lambda(V')<(1+2^{-n})\mu(U_{V'})$ for every set $V'\in\V_n(U')$.

If $\mu(U')=0$, then let $\{U_{V'}\}_{V'\in\V_n'(U')}$ be any partition of $U'$.

Consider the clopen partition
$$\U_n(U)=\bigcup_{U'\in\U'_n(U)}\{U_{V'}:V'\in\V_n'(U')\}$$of $U$, the family  $\V'_n(U)=\bigcup_{U'\in\U'_n(U)}\V_n'(U')$ and the bijective map $\xi'_{U,n}:\U_n(U)\to\V_n'(U)$ assigning to each set $U''\in\U_n(U)$ the unique set $V'\in\V_n'(V)$ such that $U''=U_{V'}$ where $V'\in \V'_n(U')$ and $U'\in\U'_n(U)$ is the set containing $U''$.

It is easy to see that the families $\U_n(U)$ and $\V_n'(V)$ satisfy the conditions $(1_{U,n})$, $(4_{U,n})$ and $(6_{U,n})$.

Let $\U_n=\bigcup_{U\in\U_{n-1}}\U_n(U)$, $\V_n'=\bigcup_{U\in\U_{n-1}}\V'_n(U)$ and $\xi'_n:\U_n\to\V_n'$ be the bijective map such that $\xi'_n|\U_n(U)=\xi'_{U,n}$ for every $U\in\U_{n-1}$.

Consider the subfamily $\U_n^0=\{U\in\U_n:\mu(U)=0\}\subset \U_n$ and for every set $U\in\U_n^0$ choose a clopen set $\xi_n(U)\subset \xi_n'(U)\setminus B_{m_{n-1}}$ of measure $\lambda(\xi_n(U))<1/(2^n\cdot|\U_n|)$.

Let $\xi_n(U)=\xi_n'(U)$ for every $U\in\U_n\setminus\U_n^0$ and put $\V_n=\{\xi_n(U):U\in\U_n\}$.

Let $p_n:\U_n\to\U_{n-1}$ be the function assigning to each set $U'\in\U_n$ the unique set $U\in\U_{n-1}$ such that $U'\subset \U_n(U)$ and hence $U'\subset U$. By analogy define the map $\pi_n:\V_n\to\V_{n-1}$ assigning to each set $V'\in\V_n$ the unique set $V\in\V_{n-1}$ such that $V'\subset V$. It is easy to check that the families $\U_n$, $\V_n$, and the functions $\xi_n$, $p_n$ and $\pi_n$ satisfy the conditions $(1_n)$--$(6_n)$.

Using the equality   $\lambda(\bigcup_{m\in\w}B_m)=\lambda(Y)$ and the conditions $(4_n)$ and $(5_n)$, we can find a number $m_n>m_{n-1}$ such that the condition $(7_n)$ holds.

Using the equality $\mu(\bigcup_{k\in\w}A_k)=\mu(X)$ and taking into account that $\lambda(V\setminus B_{m_n})>0$ for every clopen set $V\in\V_n$, we can find a number $k_n>k_{n-1}$ satisfying the condition $(8_n)$. This completes the inductive construction.
\smallskip

Now define the map $f:X\to Y$ assigning to each point $x\in X$ the unique point of the intersection $\bigcap\{\xi_n(U):x\in U\in\U_n,\;n\in\w\}$. The conditions $(1_n)$--$(3_n)$, $n\in\IN$, imply that the map $f$ is a well-defined topological embedding, the conditions $(4_n)$ and $(5_n)$ for $n\in\IN$ imply that $f$ is measure-preserving, and the conditions $(6_n)$, $n\in\IN$, imply that $f(G_X)\subset G_Y$.
\end{proof}

\begin{lemma}\label{hom1} Let $(X,\mu)$ be a good Cantor measure space, $A$ be a closed nowhere dense subset and $B\subset X$ be a Borel subset of measure $\mu(B)>\mu(A)$ in $X$. Then there is a measure-preserving homeomorphism $h:X\to X$ such that $h(A)\subset B$.
\end{lemma}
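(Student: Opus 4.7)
The plan is to reduce to Lemma~\ref{exmes}: once I produce a closed nowhere dense set $A'\subset B$ together with a measure-preserving homeomorphism $f:A\to A'$, Lemma~\ref{exmes} will extend $f$ to the required $h\in\HH_\mu(X)$ with $h(A)=A'\subset B$. The main tool in constructing $A'$ and $f$ will be Lemma~\ref{haplyk}, applied to a clopen neighborhood of $A$ and a suitable Cantor subset of $B$.

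\textbf{Step 1: a nowhere-dense target inside $B$.} First I would fix $\varepsilon\in(0,\mu(B)-\mu(A))$ and pick a closed nowhere dense set $N\subset X$ of measure $\mu(N)>\mu(X)-\varepsilon$, for example the complement of a small open neighborhood of a countable dense subset of $X$. Since $\mu(N\cap B)\ge\mu(N)+\mu(B)-\mu(X)>\mu(A)$, inner regularity of $\mu$ yields a compact $K\subset N\cap B$ with $\mu(K)>\mu(A)$; being contained in $N$, this $K$ is nowhere dense in $X$. I would then replace $K$ by $K_*:=\mathrm{supp}(\mu|K)\subset K$, which keeps $\mu(K_*)=\mu(K)>\mu(A)$ and $K_*\subset B$ but makes $\mu|K_*$ strictly positive and continuous. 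In particular $K_*$ has no isolated points (an isolated point would have positive $\mu|K_*$-measure, contradicting continuity), so $(K_*,\mu|K_*)$ is a Cantor measure space.

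\textbf{Step 2: applying Lemma~\ref{haplyk}.} Using that $\mu(\Clop(X))$ is dense in $[0,\mu(X)]$---any sufficiently fine clopen partition of $X$ produces partial sums forming an $\varepsilon$-net---I would pick $\alpha\in\mu(\Clop(X))$ with $\mu(A)<\alpha<\mu(K_*)$ and invoke Lemma~\ref{scond} (with $U=X$) to obtain a clopen $V\subset X$ containing $A$ with $\mu(V)=\alpha$. Then $(V,\mu|V)$ is a Cantor measure space of measure $\alpha<\mu(K_*)$. Taking $G_V:=\emptyset$ and choosing a dense null $G_\delta$-subset $G_{K_*}\subset K_*$ (for instance $\bigcap_n U_n$ where each $U_n$ is an open neighborhood of a fixed countable dense $D\subset K_*$ with $\mu(U_n)<1/n$), Lemma~\ref{haplyk} yields a measure-preserving embedding $f:V\to K_*$. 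The image $A':=f(A)$ is then closed and nowhere dense in $X$ (as a compact subset of the closed nowhere dense set $K_*$), is contained in $B$, and $f|A:A\to A'$ is a measure-preserving homeomorphism; a final application of Lemma~\ref{exmes} extends $f|A$ to the desired $h\in\HH_\mu(X)$.

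\textbf{Main obstacle.} The hardest part will be Step~1: producing $K_*\subset B$ that is simultaneously nowhere dense in $X$, has $\mu$-measure exceeding $\mu(A)$, and carries a strictly positive continuous restricted measure. Intersecting with the large nowhere-dense set $N$ forces nowhere-density inside $B$, and passing to $\mathrm{supp}(\mu|K)$ forces strict positivity; note that without strict positivity I could not invoke Lemma~\ref{haplyk}, and without the Cantor property of $K_*$ the conclusion of Lemma~\ref{haplyk} would be empty. Once these two moves are made, applying Lemma~\ref{haplyk} with the trivial choice $G_V=\emptyset$ bypasses the need to use the null-set-constraint machinery of that lemma and the rest of the argument is routine.
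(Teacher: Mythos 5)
Your proposal is correct and follows essentially the same route as the paper: embed a Cantor-measure-space enlargement of $A$ into a nowhere dense Cantor subset of $B$ of larger measure carrying a strictly positive restricted measure, using Lemma~\ref{haplyk}, and then extend by Lemma~\ref{exmes}. The only (harmless) deviations are that you enlarge $A$ to a clopen set $V$ of slightly larger measure via Lemma~\ref{scond}, where the paper instead builds a closed set $\tilde A\supset A$ without isolated points satisfying $\mu(\tilde A)=\mu(A)$, and that you force nowhere density of the target $K$ by intersecting $B$ with a large closed nowhere dense set, where the paper takes $K\subset B\setminus Q$ for a countable dense $Q$.
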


\begin{proof} Using the fact that each open subset of $X$ contains a Cantor set of zero measure, we can construct a closed subset $\tilde A\subset X$ without isolated points which contains the set $A$ and has measure $\mu(\tilde A)=\mu(A)$. Then $(\tilde A,\mu|\tilde A)$ is a Cantor measure space. Fix any countable dense subset $Q\subset X$. Since $\mu(B\setminus Q)=\mu(B)>\mu(A)$, by the regularity of the measure $\mu$ there is a compact subset $K\subset B\setminus Q$ such that $\mu(K)>\mu(A)$. Replacing $K$ by the support of the measure $\mu|K$ we can assume that the measure $\mu|K$ is strictly positive. In this case $K$ has no isolated points and hence $(K,\mu|K)$ is a Cantor measure space. Let $G_A=\emptyset$ and $G_K$ be a dense $G_\delta$-set of measure $\mu(G_K)=0$ in $K$. By Lemma~\ref{haplyk}, there is a measure-preserving embedding $f:\tilde A\to K$ such that $h(G_A)\subset G_K$. By Lemma~\ref{exmes}, the embedding $f$ extends to a measure-preserving homeomorphism $h:X\to X$. It follows that $h(A)\subset h(\tilde A)=f(\tilde A)\subset K\subset B$.
\end{proof}

Lemma~\ref{hom1} implies the following ergodicity property of good Cantor measure spaces.

\begin{lemma}\label{ergo} If $A\subset X$ is a closed subset of positive measure in a good Cantor measure space $(X,\mu)$, then for any $\e>0$ there are homeomorphisms $h_1,\dots,h_n\in\HH_\mu(X)$ such that the set $B=\bigcup_{i=1}^nh_i(A)$ has measure $\mu(B)>\mu(X)-\e$.
\end{lemma}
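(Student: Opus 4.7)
The plan is to reduce the problem to covering by translates of a small closed nowhere dense piece of $A$, and then iterate Lemma~\ref{hom1} greedily.

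\textbf{Step 1 (reduction to a small closed nowhere dense subset).} I would first manufacture a closed nowhere dense set $A'\subset A$ with $0<\mu(A')<\e$. Any countable dense set $Q\subset X$ has measure zero, and by continuity of $\mu$ together with zero-dimensionality of $X$, each point of $Q$ has a clopen neighborhood of arbitrarily small measure; taking the union of such neighborhoods yields a dense open set $D\subset X$ with $\mu(D)$ as small as desired, whose complement $X\setminus D$ is then closed nowhere dense in $X$. Choosing $\mu(D)<\mu(A)$ gives $\mu(A\setminus D)>0$, and intersecting $A\setminus D$ with a suitable member of a clopen partition of $X$ into pieces of measure less than $\e$ produces the desired $A'$. (The same local argument—small-measure clopen neighborhoods plus compactness—provides such a partition.) Because $h_i(A)\supset h_i(A')$ for every $h_i$, it suffices to find $h_1,\dots,h_n\in\HH_\mu(X)$ with $\mu\bigl(\bigcup_{i=1}^n h_i(A')\bigr)>\mu(X)-\e$.

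\textbf{Step 2 (greedy exhaustion).} Starting from $B_0=\emptyset$, I would build $h_{n+1}$ and $B_{n+1}=B_n\cup h_{n+1}(A')$ inductively. While $\mu(X\setminus B_n)>\mu(A')$, the Borel set $X\setminus B_n$ has measure strictly greater than $\mu(A')$, so Lemma~\ref{hom1}, applied to the closed nowhere dense set $A'$ and the Borel set $X\setminus B_n$, supplies $h_{n+1}\in\HH_\mu(X)$ with $h_{n+1}(A')\subset X\setminus B_n$. This keeps the pieces $h_i(A')$ pairwise disjoint, so $\mu(B_n)=n\cdot\mu(A')$, and after at most $\lceil\mu(X)/\mu(A')\rceil$ iterations the process must halt, leaving $\mu(X\setminus B_n)\le\mu(A')<\e$. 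Then $\bigcup_{i=1}^n h_i(A)\supset B_n$ has measure exceeding $\mu(X)-\e$, as required.

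\textbf{Main obstacle.} The substantive work is already packaged in Lemma~\ref{hom1}, on which Step~2 is a direct rerun. The one slightly fiddly point is Step~1, because $A'$ has to be simultaneously closed in $X$, nowhere dense in $X$, and of arbitrarily small positive measure; however, the standard maneuver of removing a small-measure open neighborhood of a countable dense set and then localizing within a clopen set of small measure takes care of all three conditions at once, so no real surprise is expected.
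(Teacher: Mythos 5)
Your argument is correct, and it uses the same key ingredient as the paper (Lemma~\ref{hom1}) but organizes the covering in the opposite direction. The paper chooses a finite disjoint family $\F$ of closed nowhere dense subsets of $X$ with $\mu(\bigcup\F)>\mu(X)-\e$ and $\mu(F)<\mu(A)$ for each $F\in\F$, applies Lemma~\ref{hom1} to push each $F$ \emph{into} $A$, and then takes the inverse homeomorphisms; no shrinking of $A$ is needed and the count $n=|\F|$ is fixed in advance. You instead shrink $A$ to a closed nowhere dense piece $A'$ of small positive measure and greedily push disjoint copies of $A'$ into the uncovered region, terminating by a measure count. Both routes are sound; the paper's version avoids your Step~1 entirely (it never needs a nowhere dense subset of $A$, only that the pieces of the near-partition of $X$ have measure below $\mu(A)$), while your version has the mild advantage that the images $h_i(A')$ are pairwise disjoint, which makes the measure bookkeeping completely transparent. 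Your Step~1 construction (deleting a dense open set of measure less than $\mu(A)$ and localizing in a clopen set of measure less than $\e$) is a standard maneuver and does work as stated.
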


\begin{proof} Using the continuity of the measure $\mu$, choose a finite disjoint family $\F$ consisting of closed nowhere dense subsets of $X$ such that $\mu(\bigcup\F)>\mu(X)-\e$ and $\mu(F)<\mu(A)$ for every $F\in\F$. For every $F\in\F$ apply Lemma~\ref{hom1} to find a homeomorphism $h_F\in\HH_\mu(X)$ such that $h_F(F)\subset A$.
Then the set $B=\bigcup_{F\in\F}h_U^{-1}(A)$ contains the set $\bigcup\F$ and hence has measure $\mu(B)\ge\mu(\bigcup\F)>\mu(X)-\e$.
\end{proof}

It is well-known that for any compact metric space $(X,d)$ the homeomorphism group $\HH(X)$ endowed with the compact-open topology is a Polish topological group. Moreover, the compact-open topology on $\HH(X)$ is generated by the complete metric
$$d_\HH(f,g)=\max_{x\in X}d(f(x),g(x))+\max_{x\in X}d(f^{-1}(x),g^{-1}(x))\mbox{ \ where $f,g\in\HH(X)$}.$$
For any Borel $\sigma$-additive measure $\mu$ on $X$ the subgroup $\HH_\mu(X)$ of measure-preserving homeomorphisms is a closed subgroup in $\HH(X)$.

\begin{lemma}\label{emb} Let $(X,\mu)$ be a good Cantor measure space, and $d$ be a metric generating the topology of $X$. Let $B\subset X$ be a Borel subset of measure $\mu(B)=\mu(X)$. Let $A\subset C$ be two closed nowhere dense subsets in $X$ such that $A\subset B$. For any $\e>0$  there exists a measure-preserving homeomorphism $h:X\to X$ such that $h|A=\id|A$, $ h(C)\subset B$, and $d_\HH(h,\id)\le\e$.
\end{lemma}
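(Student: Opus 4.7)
The plan is to combine a clopen partition of $X$ into $\e/2$-small pieces (which ensures $d_\HH(h,\id)<\e$ automatically) with an ``onion'' construction around $A$ inside each piece that meets $A$ (which secures $h|A=\id$). Using zero-dimensionality, first partition $X$ into finitely many clopen sets $W_1,\dots,W_m$ with $\diam(W_i)<\e/2$. Each $(W_i,\mu|W_i)$ is again a good Cantor measure space (the Subset Condition passes to clopen subsets), $C\cap W_i$ is closed nowhere dense in $W_i$, and $\mu(B\cap W_i)=\mu(W_i)>\mu(C\cap W_i)$. The aim is then to construct, for each $i$, a measure-preserving homeomorphism $h_i:W_i\to W_i$ with $h_i|A\cap W_i=\id$ and $h_i(C\cap W_i)\subset B\cap W_i$, and glue these by the rule $h|W_i=h_i$.

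For the indices $i$ with $W_i\cap A=\emptyset$, obtain $h_i$ by a direct application of Lemma~\ref{hom1} inside $(W_i,\mu|W_i)$. For the indices $i$ with $W_i\cap A\ne\emptyset$, use the following onion construction inside $W_i$: choose clopen neighborhoods $W_i=U_0^i\supset U_1^i\supset U_2^i\supset\cdots$ of $A\cap W_i$ satisfying $\bigcap_n U_n^i=A\cap W_i$ and $U_n^i\subset O_{1/n}(A\cap W_i)$, so that $W_i\setminus A$ decomposes into the clopen annuli $U_n^i\setminus U_{n+1}^i$. Subdivide each annulus into finitely many clopen pieces $P_{n,j}^i$ of diameter less than $1/n$, and on each non-empty piece apply Lemma~\ref{hom1} inside $(P_{n,j}^i,\mu|P_{n,j}^i)$ to obtain a measure-preserving self-homeomorphism $h_{n,j}^i$ sending $C\cap P_{n,j}^i$ into $B\cap P_{n,j}^i$. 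Set $h_i=\id$ on $A\cap W_i$ and $h_i=h_{n,j}^i$ on each $P_{n,j}^i$.

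All the required properties of $h$ are then essentially automatic: $h$ is measure-preserving, satisfies $h|A=\id$ and $h(C)\subset B$ by construction, and has $d_\HH(h,\id)<\e$ since $h(W_i)=W_i$ and $\diam(W_i)<\e/2$ for every $i$. The one nontrivial point, which I expect to be the main obstacle, is the continuity of each piecewise-defined $h_i$ at points of $A\cap W_i$ (and the same for $h_i^{-1}$). This is forced by the shrinking-diameter condition built into the onion: if $x_k\to x\in A\cap W_i$, then for each $N$ eventually $x_k\in U_N^i$, and if $x_k\notin A$ then $x_k$ lies in some $P_{m,j}^i$ with $m\ge N$, so $h_i(x_k)\in P_{m,j}^i$ yields $d(h_i(x_k),x_k)<1/N$; letting $N\to\infty$ then gives $h_i(x_k)\to x$.
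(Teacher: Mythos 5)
Your proof is correct and takes essentially the same route as the paper: the paper likewise covers $X\setminus A$ by a disjoint clopen family whose members have diameter at most $\min\{\e,\frac13\min_{x\in U}d(x,A)\}$ (so the pieces shrink as they approach $A$, exactly the effect of your onion), applies Lemma~\ref{hom1} on each piece, and glues with the identity on $A$. Your explicit nested neighborhoods $U^i_n$ and the continuity check at points of $A$ are just a more spelled-out version of what the paper's diameter condition encodes implicitly.
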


\begin{proof} Using the zero-dimensionality of $X\setminus A$ choose a disjoint cover $\U$ of $X\setminus A$ by clopen subsets of $X$ such that each set $U\in \U$ has $\diam(U)\le\min\{\e,\frac13 \min_{x\in U}d(x,A)\}$. The strict positivity of the measure $\mu$  implies that for every $U\in\U$ the set $U\cap C$ has measure $\mu(U\cap C)<\mu(U)=\mu(U\cap B)$. By Lemma~\ref{hom1}, there is a measure-preserving homeomorphism $h_U:U\to U$ such that $h_U(U\cap C)\subset  U\cap B$. Then the map $h:X\to X$ defined by the formula
$$h(x)=\begin{cases}x&\mbox{if $x\in A$}\\
h_U(x)&\mbox{if $x\in U$ for some $U\in\U$}
\end{cases}
$$is a measure preserving homeomorphism of $X$ such that $h|A=\id|A$, $h(C)\subset B$ and $\sup_{x\in X}d(h(x),x))\le\e$.
\end{proof}

Lemma~\ref{emb} implies the following its self-generalization.

\begin{lemma}\label{emb2} Let $(X,\mu)$ be a good Cantor measure space and $d$ be a metric generating the topology of $X$. Let $B\subset X$ be a Borel subset of measure $\mu(B)=\mu(X)$. For any $\e>0$,  homeomorphism $f\in\HH_\mu(X)$ and closed nowhere dense subsets $A\subset C$ in $X$ with $f(A)\subset B$, there exists a homeomorphism $g\in\HH_\mu(X)$ such that $g|A=f|A$, $g(C)\subset B$ and $d_\HH(f,g)<\e$.
\end{lemma}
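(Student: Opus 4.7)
The plan is to reduce Lemma~\ref{emb2} to Lemma~\ref{emb} by a left-composition trick: we adjust $f$ by a measure-preserving homeomorphism that fixes $f(A)$ pointwise, pushes $f(C)$ into $B$, and is uniformly close to the identity.

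First I would rewrite the hypotheses on the target side. Since $f\in\HH_\mu(X)$ is a homeomorphism, the images $f(A)$ and $f(C)$ are again closed nowhere dense subsets of $X$, with $f(A)\subset f(C)$, and by hypothesis $f(A)\subset B$. This is exactly the input required for Lemma~\ref{emb} applied to the pair $f(A)\subset f(C)$.

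Second, I would choose the tolerance for that application carefully so that post-composition with $f^{-1}$ does not blow up distances. Since $X$ is compact and $f^{-1}$ is continuous, $f^{-1}$ is uniformly continuous, so there exists $\delta>0$ such that $d(x,y)<\delta$ implies $d(f^{-1}(x),f^{-1}(y))<\e/2$. Set $\e'=\min\{\e/2,\delta\}$ and apply Lemma~\ref{emb} to the Borel set $B$, the closed nowhere dense pair $f(A)\subset f(C)$, and tolerance $\e'$. This yields $h\in\HH_\mu(X)$ with $h|f(A)=\id|f(A)$, $h(f(C))\subset B$, and $d_\HH(h,\id)\le \e'$.

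Finally, I would set $g:=h\circ f$, which lies in $\HH_\mu(X)$ as a composition of measure-preserving homeomorphisms. The equality $g|A=f|A$ follows because $h$ fixes $f(A)$ pointwise, and the inclusion $g(C)=h(f(C))\subset B$ is immediate. For the metric estimate,
\[
d_\HH(f,g)=\max_{x\in X}d\bigl(f(x),h(f(x))\bigr)+\max_{x\in X}d\bigl(f^{-1}(x),f^{-1}(h^{-1}(x))\bigr).
\]
The first term, after the substitution $y=f(x)$, becomes $\max_{y\in X}d(y,h(y))\le\e'\le\e/2$. For the second term, $d(x,h^{-1}(x))\le\e'\le\delta$ for every $x\in X$ by the bound $d_\HH(h,\id)\le\e'$, and hence by the choice of $\delta$ each summand is bounded by $\e/2$. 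Adding the two estimates gives $d_\HH(f,g)\le\e$, as required.

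There is no genuine obstacle in this argument; the only subtlety is choosing $\e'$ small enough relative to the modulus of uniform continuity of $f^{-1}$, and even this is only needed because the metric $d_\HH$ is symmetrized to include $f^{-1}$.
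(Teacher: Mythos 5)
Your proposal is correct and follows essentially the same route as the paper: apply Lemma~\ref{emb} to the pair $f(A)\subset f(C)$ with a tolerance controlled by the uniform continuity of $f^{-1}$, then set $g=h\circ f$. Your $\e/2$-splitting of the tolerance is in fact slightly more careful than the paper's own bookkeeping, which nominally only yields $d_\HH(f,g)<2\e$; since $\e$ is arbitrary this is immaterial.
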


\begin{proof} Given an $\e>0$ and a homeomorphism $f\in\HH_\mu(X)$, find a positive real number $\delta<\e$ such that for any points $x,y\in X$ with $d(x,y)<\delta$ we get $d(f^{-1}(x),f^{-1}(y))<\e$. Such number $\delta$ exists by the uniform continuity of the homeomorphism $f^{-1}$. Given any closed nowhere dense subsets $A\subset C$ in $X$ with $f(A)\subset B$, apply Lemma~\ref{emb} to the sets $C'=f(C)$ and $A'=f(A)\subset C'\cap B$ and
 find a homeomorphism $h\in\HH_\mu(X)$ such that $h|A'=\id|A'$, $h(C')\subset B$ and $d_\HH(h,\id)<\delta$. Then the homeomorphism $g=h\circ f$ has the required properties: $g|A=f|A$, $g(C)\subset B$ and $d_\HH(f,g)<\e$.
 \end{proof}

The following lemma will be derived from Lemma~\ref{emb2} by the standard back-and-forth technique.

\begin{lemma}\label{equiv} For any meager $F_\sigma$-sets $A,B\subset X$ of
measure $\mu(A)=\mu(B)=\mu(X)$ in a good Cantor measure space $(X,\mu)$ there is a measure-preserving homeomorphism $h\in\HH_\mu(X)$ such that $h(A)=B$.
\end{lemma}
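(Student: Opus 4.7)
My plan is a standard back-and-forth construction in the Polish group $\HH_\mu(X)$, using Lemma~\ref{emb2} as the single extension step in both the forth and back directions. Since $A$ and $B$ are meager $F_\sigma$-sets, write $A=\bigcup_{k\in\w}A_k$ and $B=\bigcup_{k\in\w}B_k$, where $(A_k)$ and $(B_k)$ are increasing sequences of closed nowhere dense subsets of $X$. The sets $A,B\subset X$ themselves are Borel of full measure, so they are legitimate choices for the ``target'' set in Lemma~\ref{emb2}.

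By induction on $n\in\w$ I would build a sequence of homeomorphisms $h_n\in\HH_\mu(X)$ together with closed nowhere dense sets $F_n\subset A$ and $G_n\subset B$ satisfying $h_n(F_n)=G_n$, $F_{n-1}\subset F_n$, $G_{n-1}\subset G_n$, $h_n|F_{n-1}=h_{n-1}|F_{n-1}$, and $d_\HH(h_n,h_{n-1})<2^{-n}$. Start with $h_0=\id$, $F_0=G_0=\emptyset$. On odd steps $n=2k-1$ (the ``forth'' step), apply Lemma~\ref{emb2} with target $B$, approximated homeomorphism $h_{n-1}$, fixed set $F_{n-1}$, and enlargement $F_n:=F_{n-1}\cup A_k\subset A$; this gives $h_n\in\HH_\mu(X)$ with $h_n|F_{n-1}=h_{n-1}|F_{n-1}$, $h_n(F_n)\subset B$ and $d_\HH(h_n,h_{n-1})<2^{-n}$. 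Set $G_n:=h_n(F_n)$, which is closed nowhere dense in $B$ and contains $G_{n-1}$. On even steps $n=2k$ (the ``back'' step), apply Lemma~\ref{emb2} to the inverse $h_{n-1}^{-1}$ with target $A$, fixed set $G_{n-1}$, and enlargement $G_n:=G_{n-1}\cup B_k\subset B$; since $d_\HH$ is symmetric in $f$ and $f^{-1}$, taking the inverse of the resulting homeomorphism yields $h_n\in\HH_\mu(X)$ with $h_n^{-1}|G_{n-1}=h_{n-1}^{-1}|G_{n-1}$, $h_n^{-1}(G_n)\subset A$ and $d_\HH(h_n,h_{n-1})<2^{-n}$. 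Setting $F_n:=h_n^{-1}(G_n)$ completes the step. An easy check shows that $h_n^{-1}|G_{n-1}=h_{n-1}^{-1}|G_{n-1}$ is equivalent to $h_n|F_{n-1}=h_{n-1}|F_{n-1}$, so both inductive clauses are maintained.

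Because $\sum 2^{-n}<\infty$ and $d_\HH$ is a complete metric on the closed subgroup $\HH_\mu(X)\subset\HH(X)$, the sequence $(h_n)$ is Cauchy and converges to some $h\in\HH_\mu(X)$. The crucial observation is that for every fixed $n$ and every $m\ge n$, the construction gives $h_m|F_n=h_n|F_n$ (prove this by induction on $m$, using the agreement clauses above). Passing to the limit, $h|F_n=h_n|F_n$, so $h(F_n)=G_n$ and also $h^{-1}(G_n)=F_n$. Since $A_k\subset F_{2k-1}$ and $B_k\subset G_{2k}$, we obtain
\[
h(A)=h\Bigl(\bigcup_k A_k\Bigr)\subset\bigcup_n h(F_n)=\bigcup_n G_n\subset B,
\]
and symmetrically $h^{-1}(B)\subset A$, i.e.\ $B\subset h(A)$. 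Thus $h(A)=B$.

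The main obstacle is the usual bookkeeping in back-and-forth: one must interleave the two kinds of extension (so that $A$ is eventually covered by the $F_n$ and $B$ by the $G_n$) while guaranteeing that each step preserves what was already built on $F_{n-1}$ (or, dually, on $G_{n-1}$). Lemma~\ref{emb2} is designed precisely to allow such an extension with control on $d_\HH$, so once the indexing is set up correctly, the convergence and the equality $h(A)=B$ follow routinely.
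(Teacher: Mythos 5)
Your proposal is correct and follows essentially the same route as the paper: a back-and-forth induction driven by Lemma~\ref{emb2}, with convergence obtained from the completeness of $d_\HH$ on the closed subgroup $\HH_\mu(X)\subset\HH(X)$. The only cosmetic difference is that you run a single alternating sequence $(h_n)$, whereas the paper keeps two interleaved sequences $f_n$ and $g_n\approx f_n^{-1}$ and checks at the end that their limits are mutually inverse; the underlying argument is the same.
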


\begin{proof} Fix any metric $d$ generating the topology of the compact metrizable space $X$.

Write the meager $F_\sigma$-sets $A$ and $B$ as unions $A=\bigcup_{n\in\w}A_n$ and $B=\bigcup_{n\in\w}B_n$ of increasing sequences $(A_n)_{n\in\w}$ and $(B_n)_{n\in\w}$ of compact subsets of $X$ with $A_0=B_0=\emptyset$. Put $f_0:X\to X$ be the identity homeomorphism of $X$, and $\tilde A_0=\tilde B_0=\emptyset$. Fix any $\e>0$. By induction, for every $n\in\IN$ we shall construct two compact sets $\tilde A_n,\tilde B_n\subset X$ and measure-preserving homeomorphisms $f_n,g_n:X\to X$  satisfying the following conditions:
\begin{enumerate}
\item[$(1_n)$] $\tilde A_n=A_n\cup g_{n-1}(\tilde B_{n-1})\subset A$;
\item[$(2_n)$] $f_n|g_{n-1}(\tilde B_{n-1})=g_{n-1}^{-1}|g_{n-1}(\tilde B_{n-1})$ and $f_n(\tilde A_n)\subset B$;
\item[$(3_n)$] $\tilde d(f_n,g^{-1}_{n-1})<\e/2^{n+1}$;
%\item[$(4_n)$] $f_n|\tilde A_{n-1}=f_{n-1}|\tilde A_{n-1}$ and $\tilde d(f_n,f_{n-1})<\e/2^n$;
\item[$(4_n)$] $\tilde B_n=B_n\cup f_n(\tilde A_n)\subset B$;
\item[$(5_n)$] $g_n|f_n(\tilde A_n)=f_n^{-1}|f_n(\tilde A_n)$ and $g_n(\tilde B_n)\subset A$;
\item[$(6_n)$] $\tilde d(g_{n},f_n^{-1})<\e/2^{n+2}$.
%\item[$(8_n)$] $g_n|\tilde B_{n-1}=g_{n-1}|\tilde B_{n-1}$ and $\tilde d(g_n,g_{n-1})<\e/2^n$.
\end{enumerate}

Assume that for some $n\in\IN$ and all $k<n$ we have constructed sets $\tilde A_k$, $\tilde B_k$, and measure-preserving homeomorphisms $f_k,g_k:X\to X$ satisfying the conditions $(1_k)$--$(6_k)$. We shall construct sets $\tilde A_n,\tilde B_n$, and homeomorphisms $f_n,g_n:X\to X$. By the inductive assumption $(5_{n-1})$, $g_{n-1}(\tilde B_{n-1})\subset A$. Since $\mu(B)=\mu(X)$, we can apply Lemma~\ref{emb2} and find a measure-preserving homeomorphism $f_n:X\to X$ such that $f_n|g_{n-1}(\tilde B_{n-1})=g_{n-1}^{-1}|g_{n-1}(\tilde B_{n-1})$, $f_n(A_n)\subset B$ and $\tilde d(f_n,f_{n-1})<\e/2^{n+1}$.

Put $\tilde B_n=B_n\cup f_n(A_n\cup\tilde A_{n-1})$. Applying Lemma~\ref{emb2}, we can find a measure-preserving homeomorphism $g_n:X\to X$ such that $g_n|f_n(A_n\cup\tilde A_{n-1})=f_n^{-1}|f_n(A_n\cup\tilde A_{n-1})$, $g_n(\tilde B_n)\subset A$, and $\tilde d(g_n,f_n^{-1})<\e/2^{n+2}$. Finally put $\tilde A_n=A_n\cup\tilde A_{n-1}\cup g_n(\tilde B_n)$ and observe that the sets $\tilde A_n$, $\tilde B_n$, and the homeomorphisms $f_n$ and $g_n$ satisfy the conditions $(1_n)$--$(6_n)$.

After completing the inductive construction, we obtain sequences of sets $\tilde A_n$, $\tilde B_n$, and homeomorphisms $f_n,g_n\in\HH_\mu(X)$ satisfying the conditions $(1_n)$--$(6_n)$ for all $n\in\IN$.

The conditions $(1_n)$, $(2_n)$, $(4_n)$, and $(5_n)$ for $n\in\IN$ imply that
$$f_n|\tilde A_{n-1}=g_{n-1}^{-1}|\tilde A_{n-1}=f_{n-1}|\tilde A_{n-1}\mbox{ \ and \ }
g_n|\tilde B_{n-1}=f_n^{-1}|\tilde B_{n-1}=g_{n-1}|\tilde B_{n-1}.$$

The conditions $(3_n)$, $(6_n)$, $n\in\IN$, imply that
$$d_\HH(f_n,f_{n-1})\le d_\HH(f_n,g_{n-1}^{-1})+d_\HH(g_{n-1}^{-1},f_{n-1})=
d_\HH(f_n,g_{n-1}^{-1})+d_\HH(g_{n-1},f^{-1}_{n-1})< \frac\e{2^{n+1}}+\frac\e{2^{n+1}}=\frac\e{2^n}.
$$and
$$d_\HH(g_n,g_{n-1})\le d_\HH(g_n,f_{n}^{-1})+d_\HH(f_{n}^{-1},g_{n-1})=d_\HH(g_n,f_{n}^{-1})+
d_\HH(f_{n},g^{-1}_{n-1})< \frac\e{2^{n+2}}+\frac\e{2^{n+1}}<\frac\e{2^n}.
$$
It follows that the sequence of measure-preserving homeomorphisms $(f_n)_{n\in\w}$ in the complete metric group $(\HH(X),d_\HH)$ is Cauchy and hence tends to a measure-preserving homeomorphism $f=\lim_{n\to\infty}f_n$ such that $d_\HH(f,\id)\le\sum_{n=1}d_\HH(f_n,f_{n-1})<\e$. By the same reason, the sequence $(g_n)_{n\in\w}$ tends to a measure-preserving homeomorphism $g=\lim_{n\to\infty}g_n$ such that $d_\HH(g,\id)<\e$. It follows that
$f\circ g|\tilde B_n=f\circ g_n|\tilde B_n=f_{n+1}\circ g_n|\tilde B_n=\id|\tilde B_n$ for every $n\in\IN$. The density of the union $B=\bigcup_{n\in\IN}\tilde B_n$ in $X$ implies that $f\circ g$ is the identity homeomorphism of $X$ and hence $g=f^{-1}$. It follows that
$f(\tilde A_n)=f_n(\tilde A_n)\subset B$ and $g(\tilde B_n)=g_n(\tilde B_n)\subset A$ for every $n\in\w$, which implies that $f(A)\subset B$ and $f^{-1}(B)=g(B)\subset A$ and hence $f(A)=B$.
\end{proof}

In the proof of the following lemma we shall use induction by the tree $\w^{<\w}=\bigcup_{n\in\w}\w^n$ consisting of finite sequences of non-negative integer numbers. So, we recall some standard facts on the structure of the tree $\w^{<\w}$.  For a finite sequence $s=(s_0,\dots,s_{n-1})\in\w^n\subset\w^{<\w}$ and a number $i\in\w$ by $s\hat{\,}i=(s_0,\dots,s_{n-1},i)$ we denote the {\em concatenation} of $s$ and $i$.
For a finite sequence $s=(n_0,\dots,n_{k-1})\in \w^k\subset \w^{<\w}$ by $|s|$ we shall denote its length $k$. For an infinite sequence $s\in\w^\w$ we put $|s|=\w$. For any (finite or infinite) sequence $s=(s_i)_{i<|s|}\in\w^{<\w}\cup\w^\w$ and a number $n\le |s|$ by $s|n=(s_0,\dots,s_{n-1})$ we denote the initial segment of $s$. The set $\w^{<\w}$ is endowed with the partial order $\le$ in which $s\le t$ iff $s$ coincides with the initial segment $t\big|n$ of $t$ for $n=|s|$. The partially ordered set $(\w^{<\w},\le)$ is a tree is the sense that it has the smallest element $\emptyset$ (the empty sequence) and for any $s\in\w^{<\w}$ the lower set ${\downarrow}s=\{t\in\w^{<\w}:t\le s\}=\{s|n:n\le|s|\}$ is well-ordered.

We recall that for a measure space $(X,\mu)$ by $\mathcal E$ we denote the $\sigma$-ideal generated by closed null subsets of $X$.

\begin{lemma}\label{analytic} If an analytic subset $A\subset X$ of a Cantor measure space $(X,\mu)$ is not contained in the $\sigma$-ideal $\mathcal E$, then $A$ contains a $G_\delta$-subset $G$ of $X$ such that $\mu(G)=0$ and the measure $\mu|\bar G$ is strictly positive.
\end{lemma}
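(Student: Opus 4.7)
The plan is to find a closed set $K\subseteq X$ with $\mu|K$ strictly positive on which $A$ is comeager; the desired $G$ will then arise as the intersection of a dense $G_\delta$ subset of $A\cap K$ with a dense $G_\delta$ null subset of $K$.

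The first reduction is to replace $X$ by $\mathrm{supp}(\mu)$, still a Cantor set by continuity of $\mu$, noting that $A\cap\mathrm{supp}(\mu)\notin\mathcal{E}$ because $X\setminus\mathrm{supp}(\mu)$ is an $F_\sigma$ null set. I then perform the standard $\mathcal{E}$-localization: put $Z=\bigcup\{V\in\Clop(X):A\cap V\in\mathcal{E}\}$, which is a countable such union by second countability, so $A\cap Z\in\mathcal{E}$. The closed set $F:=X\setminus Z$ then satisfies $A\cap V\notin\mathcal{E}$ for every clopen $V\subseteq X$ meeting $F$, and $\mu|F$ is strictly positive: if some such $V$ had $\mu(V\cap F)=0$, then $V\cap F$ would be closed null, hence in $\mathcal{E}$, which combined with $A\cap V\cap Z\subseteq A\cap Z\in\mathcal{E}$ would put $A\cap V\in\mathcal{E}$, a contradiction. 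In particular $F$ has no isolated points (hence is itself a Cantor set), and $A\cap F$ is dense in $F$.

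The central step is to produce $K\subseteq F$ on which $A$ is comeager. By the Baire property of the analytic set $A\cap F$ in $F$, write $A\cap F=U\triangle M$ with $U$ open in $F$ and $M$ meager; if $U$ is dense in $F$, take $K=F$. Otherwise there is a non-empty clopen $V\subseteq F$ disjoint from $U$, so $A\cap V$ is analytic and meager in $V$, whence $A\cap V\subseteq\bigcup_n P_n$ for some closed nowhere dense $P_n\subseteq V$; since $A\cap V\notin\mathcal{E}$, some $P_{n_0}$ has $\mu(P_{n_0})>0$ and $A\cap P_{n_0}\notin\mathcal{E}$. Passing to the Cantor measure subspace $K_1:=\mathrm{supp}(\mu|P_{n_0})$ and relocalizing returns the same situation inside a smaller closed set, and we iterate. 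The main obstacle is showing that this recursive descent terminates at some $K$ with $A\cap K$ comeager in $K$; I plan to force termination by choosing $V$ and $P_{n_0}$ of diameter $<2^{-j}$ at the $j$-th step and invoking the Souslin representation of $A$, either to cut off the recursion in finitely many steps or to assemble the limit into a closed $K$ via a fusion argument along a suitable $2^{<\omega}$-indexed scheme of positive-measure subsets of the $\mathrm{supp}(\mu|P_{n_0})$'s.

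Once such $K$ is available, the rest is routine. Since $A\cap K$ is analytic and comeager in $K$, it contains a dense $G_\delta$ subset $D$ of $K$. Since $\mu|K$ is continuous and strictly positive, $K$ admits a dense $G_\delta$ null subset $N$ (cover a countable dense subset of $K$ by clopens of total measure $<1/k$ and intersect over $k$). Then $G:=D\cap N$ is $G_\delta$ in $K$, hence $G_\delta$ in $X$ because $K$ is closed in $X$, is null since $G\subseteq N$, and is dense in $K$ by Baire category, giving $\overline{G}=K$ and $\mu|\overline{G}$ strictly positive.
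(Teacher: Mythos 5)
Your $\mathcal E$-localization of $A$ inside $X$ and your final assembly (a dense $G_\delta$ inside a comeager analytic set, intersected with a null dense $G_\delta$ of $K$) are both sound, and your target --- a closed $K$ with $\mu|K$ strictly positive on which $A\cap K$ is comeager --- is indeed equivalent to the lemma's conclusion (take $K=\bar G$ for one direction, since a dense $G_\delta$ is comeager in its closure). But the central step that is supposed to produce $K$ is not proved, and your own text concedes this: the recursive descent $F\supset K_1\supset K_2\supset\cdots$ comes with no termination guarantee and no measure of progress, and the two repair plans you sketch do not work as stated. If you force $\diam(K_j)<2^{-j}$ along a linear descent, the intersection collapses to a single point. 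If instead you branch into a $2^{<\w}$-indexed fusion of positive-measure closed pieces built inside $X$, the limit set $K=\bigcap_n\bigcup_{s\in 2^n}K_s$ carries no control over $A\cap K$: the points of $K$ are limits of points chosen from $A$, and limits of points of an analytic set need not belong to it, so $A\cap K$ could fail to be comeager in $K$ --- it could even be empty. Note also that $\mathcal E$-positivity is not preserved under decreasing intersections of closed sets, so nothing useful survives to the limit of your descent. The phrase ``invoking the Souslin representation of $A$'' is exactly where the real work lives, and it is missing.

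The paper's proof shows what that work looks like: after disposing of the easy case $\mu(A)>0$, it runs the whole scheme \emph{upstairs}, in a Polish space $P$ with a continuous surjection $f:P\to A$. The $\mathcal E$-localization is performed in $P$ rather than in $X$ (shrink $P$ so that $f(U)\notin\mathcal E$ for every non-empty open $U\subset P$), and an $\w^{<\w}$-indexed scheme $(U_s)$ of open subsets of $P$ is built with $\cl_P(U_{s\hat{\,}i})\subset U_s$ and shrinking diameters, so that completeness of $P$ forces every branch to converge to a point of $P$, whose $f$-image automatically lies in $A$; the images $f(U_{s\hat{\,}i})$ are kept pairwise separated so that $G=f\circ g(\w^\w)$ is an embedded copy of $\w^\w$, hence Polish, hence $G_\delta$ in $X$; and strict positivity of $\mu|\bar G$ is arranged by planting, at each node $s$, a positive-measure compactum $K_s\subset\overline{f(U_s)}\setminus A$ into the closure of the family $\{f(U_{s\hat{\,}i})\}_{i\in\w}$. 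None of these three devices is available if the scheme is built downstairs in $X$, which is where your construction lives; transferring the fusion to the Polish parametrization of $A$ is precisely the missing idea.
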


\begin{proof} If $\mu(A)>0$, then by the regularity of the measure $\mu$, there is a compact subset $K\subset A$ of positive measure $\mu(K)$. Replacing $K$ by the support of the measure $\mu|K$, we can assume that the measure $\mu|K$ is strictly positive. Choose any dense $G_\delta$-set $G\subset K$ of measure $\mu(G)=0$ and observe that it has the required property: $\mu|\bar G$ is strictly positive.

Now assume that $\mu(A)=0$. The space $A$, being analytic, is the image of a Polish space $P$ under a surjective continuous map $f:P\to A$. Let $E\subset P$ be the set of points $x\in P$ having a neighborhood $U_x\subset P$ such that $f(U_x)\in\mathcal E$. It follows that $E$ is an open subset of $P$ such that $f(E)\in\mathcal E$ and $P\setminus E$ is a closed subset of $P$ such that for each non-empty open set  $U\subset P\setminus E$ the set $f(U)$ does not belong to the $\sigma$-ideal $\mathcal E$. Replacing $P$ by $P\setminus E$ and $A$ by $f(P\setminus E)$, we can assume that $E=\emptyset$, which means that for any non-empty open set $U\subset P$ the set $f(U)$ does not belong to the $\sigma$-ideal $\mathcal E$. Fix any complete metric $d_P$ generating the topology of the Polish space $P$ and any metric $d_X$ generating the topology of the Cantor space $X$.

By induction we shall construct a family $(U_s)_{s\in\w^{<\w}}$ of open subsets of $P$ satisfying the following conditions for every $s\in\w^{<\w}$:
\begin{enumerate}
\item[$(1_s)$] $\cl_P(U_{s\hat{\,}i}) \subset U_s$ for every $i\in\w$;
\item[$(2_s)$] $\cl_X(f(U_{s\hat{\,}i}))\cap\cl_X(f(U_{s\hat{\,}j}))=\emptyset$ for any distinct numbers $i,j\in\w$;
\item[$(3_s)$] $\max\{\diam(U_{s\hat{\,}i}),\diam(f(U_{s\hat{\,}i}))\}<1/2^{|s|+i}$ for every $i\in\w$;
\item[$(4_s)$] for the set $W_s=\bigcup_{i\in\w}f(U_{s\hat{\,}i})$ its closure $\overline{W}_s$ in $X$ has measure $\mu(\overline{W}_s)>0$.
\end{enumerate}

To start the inductive construction, put $U_\emptyset=P$. Assume that for some $s\in\w^{<\w}$ the open set $U_s\subset P$ has been constructed. By our assumption, the set $\overline{f(U_s)}$ has positive measure $\mu(\overline{f(U_s)})$. Since $\mu(A)=0$, there is a compact subset $K_s\subset \overline{f(U_s)}\setminus A$ of positive measure. Choose a sequence of pairwise distinct points $(y_{s,i})_{i\in\w}$ in $f(U_s)$ such that the space $\{y_{s,i}\}_{n\in\w}$ is discrete and contains the compact set $K_s$ in its closure. For every point $y_{s,i}$ fix a neighborhood $O(y_{s,i})\subset X$ such that $\diam(O(y_{s,i}))<1/2^{|s|+i}$ and the family $\{\cl_X(O(y_{s,i}))\}_{i\in\w}$ is disjoint. For every $i\in\w$ choose a point $x_{s,i}\in U_s$ with $f(x_{s,i})=y_{s,i}$ and choose a neighborhood $U_{s\hat{\,}i}\subset P$ of $x_{s,i}$ such that $f(U_{s\hat{\,}i})\subset O(y_{s,i})$ and $\diam(U_{s\hat{\,}i})<1/2^{|s|+i}$.
It is clear that the family $(U_{s\hat{\,}i})_{i\in\w}$ satisfies the inductive conditions $(1_s)$--$(4_s)$.

After completing the inductive construction, consider the map $g:\w^\w\to P$ assigning to each sequence $s\in\w^\w$ the unique point
of the intersection $\bigcap_{n\in\w}U_{s|n}$. The conditions $(3_s)$, $s\in\w^{<\w}$, imply that the map $g$ is continuous and the conditions $(2_s)$, $s\in\w^{<\w}$, imply that $f\circ g:\w^\w\to A$ is a topological embedding. Then $G=f\circ g(\w^\w)\subset f(P)\subset A$ is a $G_\delta$-subset of $X$, which is homeomorphic to the Baire space $\w^\w$. It remains to prove that any non-empty open set $U\subset \bar G$ has positive measure $\mu(U)$. By the regularity of the compact space $\bar G$, there is a non-empty open set $W\subset G$ such that $\cl_X(W)\subset U$. Since $f\circ g:\w^\w\to G$ is a homeomorphism, we can find a sequence $s\in\w^{<\w}$ such that for the open set $W_s=\{t\in\w^\w:s\le t\}$ we get $f\circ g(W_s)\subset W$. For every $i\in\w$ fix a point $z_i\in W_{s\hat{\,}i}\subset W_s$ and observe that $f\circ g(z_i)\subset f(U_{s\hat{\,}i})\subset O(y_{s,i})$. Since $\diam O(y_{s,i})<1/2^{|s|+i}$, the sets $\{f\circ g(z_i):i\in\w\}$ and $\{y_{s,i}:i\in\w\}$ has the same set of accumulation points in $X$. Consequently, $K_s\subset \cl_X(\{f\circ g(z_i):i\in\w\})\subset \cl_X(f(W_s))\subset \cl_X(W)\subset U$ and the set $U$ has measure $\mu(U)\ge\mu(K_s)>0$. This means that the measure $\mu|\bar G$ is strictly positive.
\end{proof}

\section{Some properties of invariant $\sigma$-ideals on good Cantor measure spaces}\label{s3}

In this section we shall prove some extremal properties of the standard invariant $\sigma$-ideals $\M$, $\mathcal N$, $\mathcal M\cap\mathcal N$, $\mathcal E$ on good Cantor measure spaces.

\begin{lemma}\label{l:C} Let $\I$ be an invariant ideal with analytic base on a good Cantor measure space $(X,\mu)$. If $\I\not\subset[X]^{\le\w}$, then $\I$ contains any closed null subset $E$ of $X$.
\end{lemma}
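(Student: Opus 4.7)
Our plan is to use the analytic base of $\I$ to extract a Cantor subset $C$ lying in $\I$, and then to transport any given closed null set $E$ into $C$ by a measure-preserving homeomorphism of $X$. Since $\I \not\subset [X]^{\le\w}$, the ideal $\I$ contains some uncountable set $A_0$, and since $\I$ has an analytic base, we may enclose $A_0$ in an analytic set $A \in \I$, so that $|A| > \w$. The classical perfect set theorem for analytic subsets of Polish spaces then furnishes a nonempty compact perfect set $C \subset A$, which is a Cantor set because $X$ is zero-dimensional; in particular $C \in \I$.

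Fix now an arbitrary closed null subset $E \subset X$. The strict positivity of $\mu$ makes $E$ nowhere dense in $X$. The proof will be complete once we produce some $h \in \HH_\mu(X)$ with $h(E) \subset C$, because then $h(E) \in \I$ and the invariance of $\I$ forces $E = h^{-1}(h(E)) \in \I$.

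We split into two cases according to $\mu(C)$. If $\mu(C) > 0$, then Lemma~\ref{hom1}, applied to the closed nowhere dense set $E$ and the Borel set $B = C$ (note that $\mu(C) > 0 = \mu(E)$), delivers such an $h$ at once. If $\mu(C) = 0$, we first thicken $E$ to a closed set $\tilde E \supset E$ of measure zero without isolated points; this uses the standard fact, already exploited at the start of the proof of Lemma~\ref{hom1}, that every nonempty open subset of $X$ contains a Cantor set of measure zero. Then $\tilde E$ and $C$ are closed nowhere dense Cantor subsets of $X$ on which $\mu$ vanishes identically, so any homeomorphism $f\colon \tilde E \to C$ is automatically measure-preserving; Lemma~\ref{exmes} extends $f$ to some $\bar f \in \HH_\mu(X)$, and $\bar f(E) \subset \bar f(\tilde E) = C$, as required. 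The only mildly delicate point is precisely the $\mu(C) = 0$ subcase, where Lemma~\ref{hom1} is unavailable, and we bypass the failing measure comparison by observing that any homeomorphism between two $\mu$-null Cantor sets is trivially measure-preserving.
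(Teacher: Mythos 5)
Your proposal is correct and follows essentially the same route as the paper: extract a Cantor set $C\in\I$ from an uncountable analytic member of $\I$ via the perfect set theorem, then transport $E$ into $C$ by a measure-preserving homeomorphism obtained from the extension machinery. The only difference is that the paper avoids your case split (and the appeal to Lemma~\ref{hom1}) by shrinking $C$ at the outset to a Cantor subset of measure zero and then embedding $E$ directly into $C$ using the universality of the Cantor set for zero-dimensional compacta, which is exactly your $\mu(C)=0$ case.
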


\begin{proof} It follows from $[X]^{\le\w}\not\subset\I$ that the ideal $\I$  contains an uncountable set $A\subset X$. Since $\I$ has an analytic base, we can assume that the set $A$ is analytic. By \cite[29.1]{Ke}, the uncountable analytic space $A$ contains a Cantor set $C\subset A\in\I$. Replacing $C$ by a smaller Cantor set, we can assume that $\mu(C)=0$.

Since $C$ contains a topological copy of each zero-dimensional compact metrizable space \cite[7.8]{Ke}, for any closed null set $E\subset X$, there is a topological embedding $f:E\to C$. Since $E$ and $C$ have measure zero, the embedding $f$ is measure-preserving. By Lemma~\ref{exmes}, the map $f$ extends to a measure-preserving homeomorphism $\bar f:X\to X$. The inclusion $f(E)\subset C\subset A\in\I$ and the invariance of the ideal $\I$ imply that $E\in\I$. \end{proof}

\begin{lemma}\label{l:E} Let $\I$ be an invariant ideal with analytic base on a good Cantor measure space $(X,\mu)$. If $\I\not\subset\E$, then $\I$ contains any nowhere dense null subset $N$ of $X$.
\end{lemma}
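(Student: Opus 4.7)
The goal is to transport the nowhere dense null set $N$ into a subset of $\I$ via a measure-preserving homeomorphism of $X$, by combining Lemma~\ref{haplyk} and Lemma~\ref{exmes}. Since $\I\not\subset\E$, pick $A\in\I$ analytic with $A\notin\E$; by Lemma~\ref{analytic}, $A$ contains a $G_\delta$-subset $G$ with $\mu(G)=0$ and $\mu|\bar G$ strictly positive, so set $K=\bar G$. For the Lemma~\ref{exmes} extension step to apply later, I first pass to a closed, nowhere-dense-in-$X$ subset $K^*\subset K$ with $\mu|K^*$ strictly positive, $\mu(K^*)>0$, and $G\cap K^*$ dense in $K^*$. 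If $K$ is already nowhere dense in $X$, take $K^*=K$; otherwise $K$ contains a clopen $W$ of $X$ and one builds $K^*\subset W$ by a Cantor scheme, recursively refining clopen pieces while at each step (a) deleting a small clopen ``hole'' from every piece to force nowhere-density in $X$, and (b) keeping certain pre-selected points of $G$ in every surviving piece so that $G\cap K^*$ is dense in $K^*$. The measures of the holes can be made small enough that $\mu(K^*)>0$ and $\mu|K^*$ remains strictly positive.

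Now, given $N$, let $N^*\supset N$ be a $G_\delta$ null set and let $\tilde N\supset\bar N$ be a closed nowhere dense Cantor subset of $X$ with $\mu(\tilde N)=\mu(\bar N)$, obtained by padding $\bar N$ with a disjoint nowhere-dense Cantor subset of zero measure, as at the beginning of the proof of Lemma~\ref{hom1}. Because $\tilde N$ is nowhere dense and $\mu$ is strictly positive, $\mu(\tilde N)<\mu(X)$, so Lemma~\ref{ergo} produces $h_1,\dots,h_m\in\HH_\mu(X)$ with $\mu(Y)>\mu(\tilde N)$ for $Y:=\bigcup_{i=1}^m h_i(K^*)$. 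A finite union of closed nowhere-dense sets with no isolated points and strictly positive induced measure, $Y$ is a closed nowhere-dense-in-$X$ Cantor set making $(Y,\mu|Y)$ a Cantor measure space with strictly positive measure; and $G^*:=\bigcup_{i=1}^m h_i(G\cap K^*)$ is a $G_\delta$ null subset of $Y$, dense in $Y$, lying inside $\bigcup_{i=1}^m h_i(A)\in\I$.

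Finally, apply Lemma~\ref{haplyk} to the Cantor measure spaces $(\tilde N,\mu|\tilde N)$ and $(Y,\mu|Y)$ with $G_X=\tilde N\cap N^*$ and $G_Y=G^*$; the hypothesis $\mu(\tilde N)<\mu(Y)$ was arranged above, and the remaining hypotheses hold by construction, giving a measure-preserving embedding $f\colon\tilde N\to Y$ with $f(N)\subset G^*$. Since $Y$ is nowhere dense in $X$, so is the closed set $f(\tilde N)\subset Y$, and Lemma~\ref{exmes} extends $f\colon\tilde N\to f(\tilde N)$ to a measure-preserving homeomorphism $\bar f\colon X\to X$. Then $\bar f(N)=f(N)\subset G^*\subset\bigcup_{i=1}^m h_i(A)\in\I$, so by invariance of $\I$ the set $\bar f^{-1}\bigl(\bigcup_{i=1}^m h_i(A)\bigr)\in\I$ contains $N$, giving $N\in\I$. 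The main difficulty is the construction of $K^*$ in the first paragraph; after $K^*$ is in hand, the rest is a direct combination of the already-established lemmas.
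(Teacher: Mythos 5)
Your proof is correct and follows essentially the same route as the paper's: Lemma~\ref{analytic} to extract a null $G_\delta$-set $G\subset A$ with $\mu|\bar G$ strictly positive, Lemma~\ref{ergo} to inflate its measure above $\mu(\bar N)$, and then Lemma~\ref{haplyk} followed by Lemma~\ref{exmes} to move $N$ into a member of $\I$. Your additional construction of the nowhere dense set $K^*$ addresses a point the paper's proof passes over silently (Lemma~\ref{exmes} requires the image of $\tilde N$ under the embedding to be nowhere dense in $X$, which is automatic only when $\bar G$ is nowhere dense), and although you only sketch that step, the Cantor-scheme argument is standard and sound.
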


\begin{proof} Given any nowhere dense null subset $N\subset X$, consider the closure $\bar N$ of $N$ in $X$ and observe that $\bar N$ is nowhere dense in $X$ and hence has measure $\mu(\bar N)<\mu(X)$ by the strict positivity of the measure $\mu$. Replacing $N$ by a larger nowhere dense null set, we can assume that $\bar N$ has no isolated points and hence is a Cantor set in $X$. The set $N$ has measure $\mu(N)=0$ and hence is contained in a $G_\delta$-set $\tilde N\subset \bar N$ of measure $\mu(\tilde N)=0$.

Since the ideal $\I\not\subset\E$ has analytic base, there exists an analytic set $A\in\I\setminus\mathcal E$. By Lemma~\ref{analytic}, the set $A$ contains a $G_\delta$-subset $G'\subset X$ such that the measure $\mu|\bar G'$ is strictly positive. By Lemma~\ref{ergo}, there are homeomorphisms $h_1,\dots,h_n\in\HH_\mu(X)$ such that the set $\bar G=\bigcup_{i=1}^nh_i(\bar G')$ has measure $\mu(\bar G)>\mu(\bar N)$.
It follows that the set $\bar G$ coincides with the closure of the $G_\delta$-set $G=\bigcup_{i=1}^nh_i(G')$, which belongs to the ideal $\I$ by the invariance of $\I$. The strict positivity of the measure $\mu|\bar G'$ implies the strict positivity of the measure $\mu|\bar G$. By Lemmas~\ref{haplyk} and \ref{exmes}, there is a homeomorphism $h\in\HH_\mu(X)$ such that $h(\bar N)\subset\bar G$ and $h(\tilde N)\subset G$. Then $N\subset\tilde N\subset h^{-1}(G)\in\I$ by the invariance of the ideal $\I$.
\end{proof}

We shall say that an ideal $\I$ on a measurable space $(X,\mu)$ has {\em measurable base} if $\I$ has a base consisting of $\mu$-measurable subsets. It follows from the measurability of analytic sets \cite[21.10]{Ke} that each ideal with analytic base on a metrizable measurable space $(X,\mu)$ has measurable base.

\begin{lemma}\label{l:N} Let $\I$ be an invariant ideal with analytic (more generally, measurable) base on a good Cantor measure space $(X,\mu)$. If $\I\not\subset\mathcal N$, then $\I$ contains each nowhere dense subset $N$ of $X$.
\end{lemma}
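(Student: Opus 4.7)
The plan is to use the assumption $\I\not\subset\mathcal N$ together with Lemmas~\ref{ergo} and \ref{hom1} to spread a single positive-measure member of $\I$ around by measure-preserving homeomorphisms until it covers enough of $X$ to absorb any prescribed nowhere dense set.

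First, since $\I$ has measurable base and $\I\not\subset\mathcal N$, I pick a $\mu$-measurable set $A\in\I$ with $\mu(A)>0$. By the regularity of $\mu$, extract a compact subset $K\subset A$ with $\mu(K)>0$; note that $K\in\I$ and, by the invariance of $\I$, $h(K)\in\I$ for every $h\in\HH_\mu(X)$. Now fix an arbitrary nowhere dense set $N\subset X$. Its closure $\bar N$ is closed and nowhere dense, so $X\setminus\bar N$ is non-empty and open, and the strict positivity of $\mu$ yields $\mu(\bar N)<\mu(X)$.

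Set $\e=\mu(X)-\mu(\bar N)>0$ and apply Lemma~\ref{ergo} to the closed set $K$ of positive measure: there exist $h_1,\dots,h_n\in\HH_\mu(X)$ such that
$$B=\bigcup_{i=1}^n h_i(K)$$
satisfies $\mu(B)>\mu(X)-\e=\mu(\bar N)$. Being a finite union of compact sets, $B$ is a compact (hence Borel) subset of $X$; being a finite union of elements of $\I$, $B\in\I$. Since $\bar N$ is closed nowhere dense and $\mu(B)>\mu(\bar N)$, Lemma~\ref{hom1} supplies a measure-preserving homeomorphism $h\in\HH_\mu(X)$ with $h(\bar N)\subset B$. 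Then $\bar N\subset h^{-1}(B)$, and the invariance of $\I$ gives $h^{-1}(B)\in\I$, whence $N\subset\bar N\in\I$.

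No serious obstacle appears: the entire argument is a short assembly of earlier results, and the only technical point worth flagging is the role of the measurable-base hypothesis, which is exactly what lets us replace an arbitrary member of $\I\setminus\mathcal N$ by a measurable witness whose positive measure is accessible to the regularity-based extraction of $K$ and to Lemma~\ref{ergo}.
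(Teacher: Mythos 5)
Your proof is correct and follows essentially the same route as the paper's: extract a compact positive-measure set $K\in\I$ from a measurable witness of $\I\not\subset\mathcal N$, inflate it via Lemma~\ref{ergo} to a set $B\in\I$ of measure exceeding $\mu(\bar N)$, and then push $\bar N$ into $B$ with Lemma~\ref{hom1}. The only difference is cosmetic (you make the choice of $\e$ explicit), so there is nothing to add.
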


\begin{proof} By our assumption, the ideal $\I\notin\mathcal N$ contains a measurable subset $A\subset X$ of positive measure $\mu(A)$. By the regularity of the measure $\mu$, the set $A$ contains a compact subset $K\subset A$ of positive measure $\mu(K)$.

Fix any nowhere dense subset $N\subset X$ and observe that its closure $\bar N$ has measure $\mu(\bar N)<\mu(X)$. By Lemma~\ref{ergo}, there are homeomorphisms $h_1,\dots,h_n\in\HH_\mu(X)$ such that the compact set $B=\bigcup_{i=1}^nh_i(K)$ has measure $\mu(B)>\mu(\bar N)$. The invariance of the ideal $\I\ni K$ guarantees that $B\in\I$. By Lemma~\ref{hom1}, there is a homomorphism $h\in\HH_\mu(X)$ such that $h(\bar N)\subset B$. Then the nowhere dense subset $N\subset \bar N\subset h^{-1}(B)\in\I$ belongs to the ideal $\I$ by the invariance of $\I$.
\end{proof}

We shall say that an ideal $\I$ on a topological space $X$ has {\em BP-base} if $\I$ has a base consisting of sets with Baire Property in $X$. We recall that a subset $A$ of a topological space $X$ has the {\em Baire property} if for some open set $U\subset X$ the symmetric difference $A\triangle U=(A\setminus U)\cup(U\setminus A)$ is meager in $X$. By Ulam-Sierpi\'nski Theorem \cite[21.6]{Ke}, any analytic subset of a metrizable space $X$ has the Baire property in $X$. Consequently, any ideal with analytic base on a metrizable space has BP-base.

\begin{lemma}\label{l:M} Let $\I$ be an invariant ideal with BP-base on a good Cantor measure space $(X,\mu)$. If $\I\not\subset\mathcal M$, then $\mathcal N\subset \I$.
\end{lemma}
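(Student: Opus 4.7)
My plan is to establish the single key claim that $X \setminus F \in \I$ for every meager $F_\sigma$ set $F \subset X$ of full $\mu$-measure. Given this, any null set is caught by constructing such an $F$ disjoint from it. To extract a starting witness, since $\I$ has BP-base and $\I \not\subset \M$ I pick $A \in \I$ with the Baire property and $A \notin \M$. Then $A \triangle U$ is meager for some nonempty open $U$; enlarging this meager set to a meager $F_\sigma$ set $M$ and, by zero-dimensionality, shrinking $U$ to a nonempty clopen $V \subset U$, I obtain $V \setminus M \subset A$, so $V \setminus M \in \I$.

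To globalize, let $\alpha = \mu(V)$. Continuity of $\mu$ yields a finite clopen partition of $X$ into pieces of measure $\le \alpha$, and Lemma~\ref{scond} enlarges each piece to a clopen $V_i \subset X$ with $\mu(V_i) = \alpha$, producing a clopen cover $X = \bigcup_{i=1}^n V_i$. Both clopen pairs $(V, V_i)$ and $(X \setminus V, X \setminus V_i)$ are good Cantor measure spaces with identical clopen values sets (the downward-closed slices of $\mu(\Clop(X))$ below $\alpha$ and $\mu(X) - \alpha$, respectively, by Lemma~\ref{scond}), so Akin's Lemma~\ref{akin} furnishes measure-preserving isomorphisms that assemble into $h_i \in \HH_\mu(X)$ with $h_i(V) = V_i$. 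Invariance of $\I$ gives $h_i(V \setminus M) = V_i \setminus h_i(M) \in \I$, and since $\bigcup_i V_i = X$,
\[
X \setminus F^* \;\subset\; \bigcup_{i=1}^n h_i(V \setminus M) \;\in\; \I,
\qquad F^* := \bigcup_{i=1}^n h_i(M).
\]
The meager $F_\sigma$ set $F^*$ can be further enlarged by union with any meager $F_\sigma$ of full measure (e.g.\ the complement of a dense null $G_\delta$) while keeping $X \setminus F^* \in \I$, so I may assume $\mu(F^*) = \mu(X)$. For any meager $F_\sigma$ set $F \subset X$ of full measure, Lemma~\ref{equiv} then produces $h \in \HH_\mu(X)$ with $h(F^*) = F$, and invariance yields $X \setminus F = h(X \setminus F^*) \in \I$.

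Finally, given $N \in \mathcal N$, I replace $N$ by a null $G_\delta$ containing it and write $X \setminus N = \bigcup_n C_n$ with increasing compact $C_n$. In each $C_n$ I construct a closed nowhere dense $K_n \subset C_n$ with $\mu(K_n) > \mu(C_n) - 1/n$ by choosing, around a dense sequence in $\mathrm{int}(C_n)$, clopen neighborhoods $U_m \subset \mathrm{int}(C_n)$ with $\sum_m \mu(U_m) < 1/n$, and setting $K_n := C_n \setminus \bigcup_m U_m$; the nowhere-dense property holds because any nonempty open $W \subset K_n$ would have to lie in $\mathrm{int}(C_n)$ and hence meet some $U_m$, a contradiction. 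Then $F_N := \bigcup_n K_n$ is a meager $F_\sigma$ of full measure disjoint from $N$, so $N \subset X \setminus F_N \in \I$ by the previous step, and hence $\mathcal N \subset \I$.

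The main obstacle I anticipate is the globalization step: turning the local datum $V \setminus M \in \I$ into the statement that $X \setminus F \in \I$ for every meager $F_\sigma$ $F$ of full measure requires the simultaneous use of Lemma~\ref{scond} to build the clopen cover by measure-$\alpha$ pieces, Akin's Lemma~\ref{akin} to produce the global homeomorphisms $h_i$ from isomorphisms on both $V_i$ and its clopen complement, and Lemma~\ref{equiv} to match arbitrary meager $F_\sigma$ sets of full measure; the rest is routine measure-theoretic bookkeeping.
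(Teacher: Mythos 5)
Your proof is correct, and its overall skeleton matches the paper's: extract from a BP-witness $A\in\I\setminus\M$ a set of the form (open set)$\setminus$(meager $F_\sigma$) lying in $\I$, spread it over $X$ by finitely many measure-preserving homeomorphisms to get the complement of a full-measure meager $F_\sigma$ set into $\I$, and then transfer to an arbitrary null set via Lemma~\ref{equiv}. Where you genuinely diverge is in the covering step. The paper invokes the homogeneity of good Cantor measure spaces (the orbit of any point under $\HH_\mu(X)$ is all of $X$, so the translates $\{h(U):h\in\HH_\mu(X)\}$ form an open cover admitting a finite subcover by compactness), which disposes of the step in two lines. You instead build the covering homeomorphisms explicitly: Lemma~\ref{scond} produces a clopen cover by sets $V_i$ of measure $\mu(V)$, and Akin's Lemma~\ref{akin} applied to the pairs $(V,V_i)$ and $(X\setminus V,X\setminus V_i)$ (whose clopen values sets are the appropriate downward slices of $\mu(\Clop(X))$, by the Subset Condition) glues to give $h_i\in\HH_\mu(X)$ with $h_i(V)=V_i$. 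This is more laborious but self-contained modulo Akin's classification theorem, and it avoids citing homogeneity as a black box; one should just note the trivial edge case $\mu(V)=\mu(X)$, which forces $V=X$ by strict positivity. Your endgame is also more hands-on: the paper simply enlarges $N$ to a \emph{dense} null $G_\delta$ $\tilde N$, so that $X\setminus\tilde N$ is automatically a full-measure meager $F_\sigma$ set, whereas you construct the nowhere dense compacta $K_n\subset C_n$ by deleting small clopen neighborhoods of a dense sequence in $\mathrm{int}(C_n)$; both are valid, the paper's being slightly shorter.
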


\begin{proof} Fix any set $A\in\I\setminus\M$. Since $\I$ has BP-base, we can replace $A$ by a larger set with Baire Property in $\I$ and assume that $A$ has the Baire property. Then for some non-empty open set $U\subset X$ the symmetric difference $A\triangle U$ is meager in $X$ and hence is contained in a meager $F_\sigma$-subset $F\subset X$. Then $U\setminus F\subset A$ is a $G_\delta$-subset of $X$, dense in the open set $U$. The transitivity of the action of the group $\HH_\mu(X)$ on $X$ implies that $\{h(U):h\in\HH_\mu(X)\}$ is an open cover of $X$. By the compactness of $X$, this cover has a finite subcover $\{h_1(U),\dots,h_n(U)\}$. Then $G=\bigcup_{i=1}^nh_i(U\setminus F)$ is a dense $G_\delta$-set in $X$, which belongs to the ideal $\I$ by the invariance of $\I$. Using the continuity and regularity of the measure $\mu$, we can choose a dense $G_\delta$-set $\tilde G\subset G$ of measure $\mu(\tilde G)=0$.

Now fix any null subset $N\in\mathcal N$ in $X$. By the continuity and regularity of the measure $\mu$ the null set $N$ can be enlarged to a null dense $G_\delta$-set $\tilde N\subset X$. It follows that $X\setminus\tilde G$ and $X\setminus\tilde N$ are two meager $F_\sigma$-sets in $X$ of measure $\mu(X\setminus\tilde G)=\mu(X)=\mu(X\setminus\tilde N)$. By Lemma~\ref{equiv}, there is a homeomorphism $h\in\HH_\mu(X)$ such that $h(X\setminus \tilde G)=X\setminus\tilde N$. Then $N\subset \tilde N=h(\tilde G)\subset h(G)\in\I$ by the invariance of the ideal $\I$.
\end{proof}

\begin{lemma}\label{l:MN} Let $\I$ be an invariant $\sigma$-ideal with measurable BP-base on a good Cantor measure space $(X,\mu)$. If $\I\not\subset \M$ and $\I\not\subset\mathcal N$, then $\I=[X]^{\le \mathfrak c}$.
\end{lemma}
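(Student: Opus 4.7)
The plan is to combine the two preceding extremal lemmas and then invoke the classical Oxtoby decomposition $X=M\cup N$ into a meager part and a null part.

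First I would observe that the hypotheses put us in position to apply both Lemma~\ref{l:N} and Lemma~\ref{l:M}. Since $\I$ has analytic (hence measurable and BP-) base and $\I\not\subset\mathcal N$, Lemma~\ref{l:N} shows that $\I$ contains every nowhere dense subset of $X$. Because $\I$ is a $\sigma$-ideal, it then contains every countable union of nowhere dense sets, i.e.\ $\M\subset\I$. Since $\I\not\subset\M$, Lemma~\ref{l:M} gives $\mathcal N\subset\I$. Thus $\M\cup\mathcal N\subset\I$.

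Next I would produce a decomposition $X=M\cup N$ with $M$ a meager $F_\sigma$-set and $N$ a null $G_\delta$-set. This is standard in any Polish space carrying a continuous Borel measure: fix a countable dense set $\{q_n\}_{n\in\w}\subset X$, for each $k\in\IN$ let $U_k=\bigcup_{n\in\w}O_{1/(k\cdot 2^{n+1})}(q_n)$, an open dense set of $\mu$-measure $\le1/k$, and set $N=\bigcap_{k\in\IN}U_k$ and $M=X\setminus N$. Then $\mu(N)=0$ and $M$ is meager $F_\sigma$.

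Finally, for any subset $A\subset X$ we write
\[
A=(A\cap M)\cup(A\cap N).
\]
The first summand is contained in the meager set $M$, hence meager, hence in $\M\subset\I$; the second is contained in the null set $N$, hence null, hence in $\mathcal N\subset\I$. Since $\I$ is closed under finite unions, $A\in\I$. As $A$ was arbitrary and $|X|=\mathfrak c$, we conclude $\I=\mathcal P(X)=[X]^{\le\mathfrak c}$.

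The only non-trivial ingredient is the decomposition $X=M\cup N$, and even that is a classical Oxtoby-type construction in a Cantor space with a continuous strictly positive Borel measure; everything else is a direct application of Lemmas~\ref{l:N} and \ref{l:M}, so I do not anticipate a real obstacle in this argument.
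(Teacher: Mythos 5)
Your argument is correct and follows essentially the same route as the paper: both deduce $\M\cup\mathcal N\subset\I$ from Lemmas~\ref{l:N} and \ref{l:M} and then conclude via the classical decomposition $X=M\cup N$ into a meager set and a null set. One trivial slip worth noting: the radius of a ball does not bound its $\mu$-measure, so in your construction of $U_k$ you should choose the radii around the $q_n$ small enough (using continuity of $\mu$ at the singletons $\{q_n\}$) that $\mu\bigl(O_{r_{n,k}}(q_n)\bigr)<1/(k\cdot2^{n+1})$, after which everything goes through.
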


\begin{proof} By Lemmas~\ref{l:M} and \ref{l:N}, $\I\not\subset\M$ and $\I\not\subset\mathcal N$ imply $\M\cup\mathcal N\subset\I$. The continuity and regularity of the measure $\mu$ implies that the space $X$ can be written as the union $X=M\cup N$ of a meager set $M$ and a null set $N$. Then $X\in \I$ being the union of two sets from the ideal $\I$.
\end{proof}

\section{Proof of Theorem~\ref{main}}\label{s4}

Let $\I$ be a $\sigma$-ideal with analytic base of a good Cantor measure space $(X,\mu)$.

If $\I$ contains only the empty set, then $\I=[X]^{\le0}$.

So, we assume that $\I$ contains some non-empty subset. In this case $\I$ contains some singleton $\{x\}\subset X$ and by the invariance of $\I$ and transitivity of the action of the group $\HH_\mu(X)$, the ideal $\I$ contains any singleton of $X$. Now the $\sigma$-additivity of $\I$ implies $[X]^{\le\w}\subset\I$. If $\I\subset[X]^{\le\w}$, then $\I=[X]^{\le\w}$ and we are done.

So, we assume that $\I\not\subset[X]^{\le\w}$. In this case $\mathcal E\subset\I$ by Lemma~\ref{l:C}. If $\I\subset\E$, then $\I=\E$.

So, we assume that $\I\not\subset\E$. In this case Lemma~\ref{l:E} implies that $\M\cap\mathcal N\subset\I$. If $\I\subset\M\cap\mathcal N$, then $\I=\M\cap\mathcal N$.

So, we assume that $\I\not\subset\mathcal M\cap\mathcal N$, which means that $\I\not\subset\M$ or $\I\not\subset\mathcal N$.

First we assume that $\I\not\subset\M$. In this case Lemma~\ref{l:M} implies that $\mathcal N\subset\I$. If $\I\subset\mathcal N$, then $\I=\mathcal N$ and we are done. If $\I\not\subset\mathcal N$, then $\I=[X]^{\le\mathfrak c}$ by Lemma~\ref{l:MN}.

Finally, assume that $\I\not\subset\mathcal N$. In this case Lemma~\ref{l:N} implies that $\M\subset \I$. If $\I\subset\M$, then $\I=\M$. If $\I\not\subset\M$, then $\I=[X]^{\le\mathfrak c}$ according to Lemma~\ref{l:MN}.

\end{document}